\documentclass[12pt]{article}
\usepackage[utf8]{inputenc}
\usepackage{amsmath}
\usepackage{amsfonts}
\usepackage{amssymb}
\usepackage{amsthm}
\usepackage[dvipsnames]{xcolor}
\usepackage{comment}
\usepackage{algorithm}
\usepackage{algorithmic}
\usepackage{mathtools}
\usepackage{abraces}

\newcommand{\C}{\mathbb{C}}
\newcommand{\R}{\mathbb{R}}

\newcommand{\Bdist}{\mathrm{Beta}}
\newcommand{\Bfun}{\mathrm{B}}
\newcommand{\dx}{\,\mathrm{d}x}
\newcommand{\dy}{\,\mathrm{d}y}
\newcommand{\dz}{\,\mathrm{d}z}

\newcommand\change[1]{{{\color{black}#1}}}

\usepackage{eqparbox}

\usepackage{geometry}
\geometry{
	a4paper,
	total={170mm,257mm},
	left=20mm,
	top=30mm,
	bottom=30mm
}

\DeclareMathOperator{\nrank}{nrank}
\DeclareMathOperator{\rank}{rank}
\DeclareMathOperator{\diag}{diag}

\DeclareMathOperator{\rev}{rev}

\newtheorem{theorem}{Theorem}[section]
\newtheorem{definition}[theorem]{Definition}

\newtheorem{proposition}[theorem]{Proposition}
\newtheorem{lemma}[theorem]{Lemma}
\newtheorem{remark}[theorem]{Remark}
\newtheorem{example}[theorem]{Example}

\title{Singular quadratic eigenvalue problems: \\ Linearization and weak condition numbers\footnote{The work of the first author was supported by the SNSF research project \emph{Probabilistic methods for joint and singular eigenvalue problems}, grant number: 200021L\_192049. The work of the second author was supported by the CSF research project \emph{Randomized low rank algorithms and applications to parameter dependent problems}, grant number: IP-2019-04-6268. Part of this work was
done while the second author was a PostDoctoral researcher at EPFL.}}
\date{}
\author{
  Daniel Kressner\footnote{Institute of Mathematics, EPFL, CH-1015 Lausanne, Switzerland. E-mail: daniel.kressner@epfl.ch.} 
  \and
  Ivana \v{S}ain Glibi\'{c}\footnote{Faculty of Science, Department of Mathematics, University of Zagreb, HR-10000 Zagreb, Croatia. E-mail: ivanasai@math.hr.}
}

\begin{document}

\maketitle
\begin{abstract}
The numerical solution of singular eigenvalue problems is complicated by the fact that small perturbations of the coefficients may have an arbitrarily bad effect on eigenvalue accuracy. However, it has been known for a long time that such perturbations are exceptional and standard eigenvalue solvers, such as the QZ algorithm, tend to yield good accuracy despite the inevitable presence of roundoff error.
Recently, Lotz and Noferini quantified this phenomenon by introducing the concept of $\delta$-weak eigenvalue condition numbers. In this work, we consider singular quadratic eigenvalue problems and two popular linearizations. Our results show that a correctly chosen linearization increases $\delta$-weak eigenvalue condition numbers only marginally, justifying the use of these linearizations in numerical solvers also in the singular case. We propose a very simple but often effective algorithm for computing well-conditioned eigenvalues of a singular quadratic eigenvalue problems by adding small random perturbations to the coefficients. We prove that the eigenvalue condition number is, with high probability, a reliable criterion for detecting and excluding spurious eigenvalues created from the singular part.
\end{abstract}

\section{Introduction}

This work is concerned with the \emph{quadratic eigenvalue problem} associated with a matrix polynomial of the form
\begin{equation}\label{eq:qep}
    Q(\lambda) = \lambda^2M + \lambda C + K, 
\end{equation}
where $M, C, K \in \mathbb{C}^{n\times n}$. We will study the numerical computation of eigenvalues when $Q(\lambda)$ is \emph{singular}, that is, $\det (Q(\lambda)) \equiv 0$.
Quadratic eigenvalue problems occur in a wide variety of applications in particular in mechanical and electrical engineering; see~\cite{Tisseur2001} for an overview. More specifically, differential algebraic equations and control problems frequently give rise to
singular (linear or quadratic) eigenvalue problems; see, e.g.,~\cite{byers2008trimmed,Shi2004,VanDooren1981}. To define eigenvalues for such problems, one considers the \emph{normal rank} of $Q$ defined as \[ \nrank(Q) := \max_{\lambda \in\mathbb{C}}\rank(Q(\lambda)).\] Then 
$\lambda_0 \in \C$ is called a (finite) eigenvalue of $Q$ if the rank of $Q(\lambda_0)$ is strictly smaller than $\nrank(Q)$.

The numerical solution of singular eigenvalue problems is complicated by the existence of arbitrarily small matrix perturbations that move eigenvalues anywhere in the complex plane~\cite{kaagstrom20008}. Therefore, it may seem futile to attempt to compute such eigenvalues in finite precision arithmetic. On the other hand, it was recognized early on by Wilkinson~\cite{wilkinson1979kronecker} that the QZ algorithm~\cite{Golub2013} applied to a singular linear eigenvalue problem usually returns good eigenvalue approximations, despite the presence of perturbations introduced by roundoff error.
More recently, De Ter\'an, Dopico, and Moro~\cite{de2008first} explained Wilkinson's observation by proving that perturbation directions causing arbitrarily large eigenvalue changes are rare; in fact, they form a set of measure zero. For the other perturbation directions, the eigenvalue sensitivity has been analyzed in~\cite{de2008first} and~\cite{de2010first} for linear and polynomial eigenvalue problems, respectively. These results are of a qualitative nature; in particular, they do not allow to draw conclusions on the eigenvalue error one should normally \emph{expect}. Building on~\cite{de2008first,de2010first}, Lotz and Noferini~\cite{lotz2020wilkinson} address this question by assuming that the perturbation directions are uniformly distributed on the sphere and deriving the expected value as well as a tail bound for the eigenvalue sensitivity. The latter yields an upper bound on the so called \emph{$\delta$-weak (stochastic) condition number}, a quantity introduced in~\cite{lotz2020wilkinson} that bounds the eigenvalue sensitivity with probability $1-\delta$ for prescribed $0 \le \delta < 1$.

\emph{Linearization} is often the first step of numerical solvers for a quadratic eigenvalue problem, which usually means turning it into a linear eigenvalue problem of double size. Various different linearizations are available in the literature; the seminal work by Mackey et al.~\cite{mackey2006vector} identifies two useful vector spaces of linearizations, $\mathbb{L}_1$ and $\mathbb{L}_2$, which include the popular companion linearizations. The choice of linearization influences the reliability and accuracy of the eigenvalue solver. For regular matrix polynomials, Higham et al.~\cite{higham2006conditioning} have analyzed the effect of linearization on the eigenvalue condition numbers for linearizations in $\mathbb{DL} = \mathbb{L}_1\cap \mathbb{L}_2$ as well as companion linearizations. For a \emph{singular} matrix polynomial $P(\lambda)$, 
De T\'eran, Dopico and Mackey~\cite{de2009linearizations} showed that almost all elements from $\mathbb{L}_1$ and $\mathbb{L}_2$ still satisfy the usual definition of (strong) linearization. Additionally, they also explain how the so called minimal indices and minimal bases of $P$ are related to the minimal indices and minimal bases of the corresponding linearization. Interestingly, $\mathbb{DL}$ does \emph{not} contain a linearization for singular matrix polynomials\change{; see~\cite{Dopico2022} for recent work how key quantities can still be recoverd from linearizations in $\mathbb{DL}$.}

A common approach to solving a singular linear eigenvalue problem $C(\lambda) = A-\lambda B$ is to first extract the regular part by first computing the so called staircase form~\cite{Demmel1993,Demmel1993a,VanDooren1979} and then applying the QZ algorithm. For singular polynomial eigenvalue problems, this extraction can simply be applied after linearization; exploiting the result from~\cite{de2009linearizations} discussed above. A more sophisticated technique of combing extraction with linearization is derived in~\cite{byers2008trimmed}. As discussed, e.g., in~\cite[Pg. 1023]{hochstenbach2019solving}, the computation of the staircase form can be significantly more expensive than the QZ algorithm and, possibly more importantly, the identification of the regular part crucially relies on rank decisions that are prone to become incorrect in finite precision arithmetic. In view of Wilkinson's observation, one may wonder whether it is not possible to skip the extraction step and apply QZ directly to $C(\lambda)$.  One challenge, certainly from a theoretical perspective but quite possibly also from a practical perspective, is that it is unreasonable to use standard random matrix distributions for modeling the roundoff error incurred by the QZ algorithm; see~\cite[Sec. 2.8]{Higham2002}. One way to fix this is to introduce prescribed random perturbations in $A$ and $B$. Hochstenbach, Mehl, and Plestenjak~\cite{hochstenbach2019solving} proposed to use random perturbations of rank $k := n-\nrank(C)$, which -- with probability one -- leave the eigenvalues of $C(\lambda)$ unchanged and create
additional spurious eigenvalues from the perturbed singular part. Heuristic yet effective criteria are used to identify and exclude these spurious eigenvalues. From a theoretical perspective, a major disadvantage of such rank-$k$ perturbations is that they are much harder to analyze than unstructured random perturbations; they certainly do not fit the framework of~\cite{lotz2020wilkinson}. In turn, the effect of such perturbations on weak eigenvalue condition numbers remains unclear and it is difficult to develop a theoretically justified criterion for identifying spurious eigenvalues.

This work makes the following new theoretical contributions to the treatment of singular quadratic eigenvalue problems:
\begin{enumerate}
\item[(C1)] Theorems~\ref{t:FCFratio} and~\ref{t:ACFratio} bound the effect of companion linearizations on $\delta$-weak eigenvalue condition numbers; thereby extending some of the results from~\cite{higham2006conditioning} to the singular case.
\item[(C2)] Section~\ref{sec:boundcond} develops a theoretical criterion for correctly identifying, with high probability, all simple well-conditioned eigenvalues of a matrix polynomial subject to uniformly distributed perturbations.
 \end{enumerate}
To achieve these contributions, our work closely follows and extends the work by Lotz and Noferini~\cite{lotz2020wilkinson}. For example, for (C1) the upper bounds on $\delta$-weak eigenvalue condition numbers from~\cite{lotz2020wilkinson} are combined with newly developed lower bounds; see Theorem~\ref{t:deltaWeakCondLowerBound}. Both, (C1) and (C2) have immediate practical consequences. In particular, we show that there is always a numerically safe choice of companion linearization, that is, the linearization only has a mild impact on condition numbers. Note, however, that one may have to choose a different companion linearization depending on whether the eigenvalues of interest have small or large magnitude, which is in line with similar results~\cite{higham2006conditioning,adhikari2011structured} for regular matrix polynomials.

\section{Definitions and preliminaries}

In order to introduce notation and definitions for linear and quadratic polynomials in a uniform way \change{and maintain generality as much as possible}, we first consider a general matrix polynomial taking the form
\begin{equation} \label{eq:genpoly} P(\lambda) = \sum^m_{i=0}\lambda^iA_i,\quad A_i\in\mathbb{C}^{n\times n}.
\end{equation}
  We recall that $\lambda_0$ is a (finite) eigenvalue of $P$ if
$
    \rank(P(\lambda_0)) < \nrank (P) := \max_{\lambda \in\mathbb{C}}\rank(P(\lambda)).
$

In this work, we will only deal with simple eigenvalues; if $P$ is singular the Smith canonical form can be used to define when an eigenvalue is simple~\cite{de2010first}.

To define eigenvectors for singular $P$, we follow~\cite{Dopico2020,lotz2020wilkinson}. Let $\mathbb{C}(\lambda)$ denote the field of rational functions with complex coefficients and $\mathbb{C}^n(\lambda)$ the vector space over $\mathbb{C}(\lambda)$ of $n$--tuples of rational functions.
\begin{definition}[\cite{de2009linearizations}] \label{d:nullspaces}
For an $n\times n$ matrix polynomial $P(\lambda)$, we call 
\[
 \mathcal{N}(P) : = \left\{ x(\lambda) \in \mathbb{C}(\lambda)^n:\; P(\lambda)x(\lambda) \equiv 0 \right\}
\]
and
\[
 \mathcal{N}_*(P) : = \left\{ y(\lambda) \in \mathbb{C}\big(\overline{\lambda}\big)^n:\; y(\lambda)^* P(\lambda) \equiv 0 \right\}
\]
the \emph{right} and \emph{left nullspace} of $P$, respectively.
\end{definition}
The two subspaces have the same dimension because \[ r:=\nrank(P) = n-\dim \mathcal{N}(P) = n-\dim \mathcal{N}_*(P). \]
To work with these subspaces, we first note that one can always choose polynomial bases. In particular, an $n\times (n-r)$ matrix polynomial
$\big[ x_1(\lambda),\ldots,x_{n-r}(\lambda)\big]$ is called a \emph{minimal basis} of $\mathcal{N}(P)$ if the sum of the degree of its columns is minimal among all polynomial bases of $\mathcal{N}(P)$. Analogously, we let 
$\big[ y_1(\lambda),\ldots,y_{n-r}(\lambda)\big]$ denote a minimal basis of $\mathcal{N}_*(P)$.
Then \change{one defines~\cite{Dopico2020} for a finite eigenvalue $\lambda_0$ the subspaces}
\begin{align*}
\begin{split}
\ker_{\lambda_0}(P) &:= \mathrm{range} \big[ x_1(\lambda_0),\ldots,x_{n-r}(\lambda_0)\big] \subsetneq \ker(P(\lambda_0)),  \\
    \ker_{\lambda_0}(P^*) &:= \mathrm{range} \big[ y_1(\lambda_0),\ldots,y_{n-r}(\lambda_0)\big] \subsetneq \ker(P(\lambda_0)^*).
\end{split}
\end{align*}
In accordance with~\cite{de2010first}, $\ker_{\lambda_0}(P)$ and $\ker_{\lambda_0}(P^*)$ are called \emph{right} and \emph{left singular space}, respectively.
\begin{definition} \label{def:eigenvector}
 Let $\lambda_0$ be an eigenvalue of an $n\times n$ matrix polynomial $P(\lambda)$. A vector $x$ is called a right eigenvector of $P$ \change{if $x \in \ker( P(\lambda_0) )$ and
 $x\not\in \ker_{\lambda_0}(P)$.}
  A vector $y$ is called a left eigenvector of $P$ if \change{$y \in \ker( P(\lambda_0)^* )$ and $y\not\in \ker_{\lambda_0}(P^*)$.} 
\end{definition}

Let us now suppose that $\lambda_0$ is simple, which implies $\dim \ker(P(\lambda_0)) = n-r+1$, and consider any basis $X\in\mathbb{C}^{n\times(n-r)}$ of $\ker_{\lambda_0}(P)$. Then $x$ is a right eigenvector if and only if
$\begin{bmatrix} X & x \end{bmatrix} \in \mathbb{C}^{n\times (n-r+1)}$ is a basis for $\ker (P(\lambda_0))$. Given a basis $Y\in\mathbb{C}^{n\times(n-r)}$ of $\ker_{\lambda_0}(P^*)$, $y$ is a left eigenvector if and only if
$\begin{bmatrix} Y & y \end{bmatrix} \in \mathbb{C}^{n\times (n-r+1)}$ is a basis for $\ker (P(\lambda_0)^*)$. Note that, compared to simple eigenvalues of regular eigenvalue problems, there is much more freedom in choosing eigenvectors, e.g., $x$ is only determined up to components from $\ker_{\lambda_0}(P)$.

\subsection{$\delta$--weak eigenvalue condition number}\label{SectionDeltaWeak}

We now perturb $P$ with another $n\times n$ matrix polynomial $E(\lambda) = \sum^m_{i=0}\lambda^iE_i$,
\begin{equation}\label{eq:pertPEP}
    \tilde{P}(\lambda) = P(\lambda)+\varepsilon E(\lambda),\;\varepsilon>0.
\end{equation}
The norm of $E$ is measured by
\[
    \|E\|^2 := \big\| \begin{bmatrix}
    E_0 & E_1 & \cdots &E_m
\end{bmatrix} \big\|_F^2 = \sum^m_{i=0}\|E_i\|^2_F,
\]
where $\|\cdot\|_F$ denotes the Frobenius norm of a matrix.
In the singular case, the effect of perturbations on the accuracy of eigenvalues can be arbitrarily bad. To demonstrate this, we use a variation of an example from~\cite{kaagstrom20008}.
\begin{example}
    Consider the singular quadratic eigenvalue problem
    \begin{equation*}
        Q(\lambda) = \lambda^2\begin{bmatrix}
            1 & 0\\
            0 & 0
        \end{bmatrix} + \lambda \begin{bmatrix}
            -3 & 0\\
            0 & 0
        \end{bmatrix} + \begin{bmatrix}
            2 & 0\\
            0 & 0
        \end{bmatrix},
    \end{equation*}
    with two simple eigenvalues $1$ and $2$. The perturbed polynomial
    \begin{equation*}
        \tilde{Q}(\lambda) = \lambda^2\begin{bmatrix}
            1 & \varepsilon_1\\
            \varepsilon_2 & 0
        \end{bmatrix} + \lambda \begin{bmatrix}
            -3 & \varepsilon_3\\
            \varepsilon_4 & 0
        \end{bmatrix} + \begin{bmatrix}
            2 & \varepsilon_5\\
            \varepsilon_6 & 0
        \end{bmatrix}
    \end{equation*}
    is regular when $\varepsilon_i \not = 0$ and its eigenvalues are the zeros of \[ (\varepsilon_1\lambda^2 + \varepsilon_3\lambda + \varepsilon_5)(\varepsilon_2\lambda^2+\varepsilon_1\lambda+\varepsilon_6).\] By choosing the ratios
    $\varepsilon_3/\varepsilon_1$, $\varepsilon_3/\varepsilon_5$, $\varepsilon_1/\varepsilon_2$, $\varepsilon_6/\varepsilon_2$ appropriately, these perturbed eigenvalues can be placed anywhere in the complex \change{plane} for arbitrarily small $\varepsilon_i$. As shown in~\cite{de2010first} and easily verified for $Q$, this perturbation constitutes an exception. For generic $E$, the perturbed polynomial $\tilde Q(\lambda) = Q(\lambda) + \varepsilon E(\lambda)$ has eigenvalues close to $1,2$ for $\varepsilon>0$ sufficiently small.
\end{example}

To quantify the effect of perturbations in the generic case, let $\begin{bmatrix}
    X & x
\end{bmatrix}$ and $\begin{bmatrix}
    Y & y
\end{bmatrix}$ denote the bases for \change{$\ker \left(P(\lambda_0)\right)$} and \change{$\ker \left(P(\lambda_0)\right)^*$} introduced above for a simple eigenvalue $\lambda_0$ of $P$. Assume that $Y^*E(\lambda_0) X$ is nonsingular, which is generically satisfied for $E(\lambda)$. The first-order eigenvalue perturbation expansion by De Ter\'an, and Dopico~\cite{de2010first} states that there is an eigenvalue $\lambda_0(\varepsilon)$ of the perturbed polynomial $\tilde{P}(\lambda)$ such that
\begin{equation}\label{eq:Expansion}
    \lambda_0(\varepsilon) = \lambda_0 - \frac{\det (\begin{bmatrix} Y & y \end{bmatrix}^*E(\lambda_0)\begin{bmatrix} X & x \end{bmatrix})}{y^*P'(\lambda_0)x\cdot \det(Y^*E(\lambda_0)X)}\varepsilon + O(\varepsilon^2).
\end{equation}

It follows that the \emph{directional sensitivity} of $\lambda_0$ satisfies
\begin{align}
    \sigma_E &:= \lim_{\varepsilon\to 0}\frac{|\lambda_0(\varepsilon) - \lambda_0|}{\varepsilon \|E\|} \nonumber  \\
    &= \frac{1}{\|E\|}\left| \frac{\det (\begin{bmatrix} Y & y \end{bmatrix}^*E(\lambda_0)\begin{bmatrix} X & x \end{bmatrix})}{y^*P'(\lambda_0)x\cdot \det(Y^*E(\lambda_0)X)} \right|. \label{sigmaEformula}
\end{align}
The main difficulty in continuing from here is that $\sigma_E$ becomes arbitrarily large when $Y^*E(\lambda_0) X$ is close to a singular matrix. Still, one hopes that the set of such bad perturbation directions, for which $\sigma_E$ becomes large, remains small. In order to quantify this, one assumes that the matrix $\begin{bmatrix}
    E_0 & E_1 & \cdots &E_m
\end{bmatrix}$ defining the perturbation is randomly distributed, more precisely its vectorization is uniformly distributed over the unit sphere in $\C^{n^2(m+1)}$. In the following, this will be denoted by
\[
 E\sim \mathcal{U}(2N), \quad N=n^2(m+1).
\]
Lotz and Noferini~\cite{lotz2020wilkinson} have defined the $\delta$--weak condition number as the smallest bound $\kappa_w(\delta)$ such that the $\sigma_E$ stays below $\kappa_w(\delta)$ with probability at least $1-\delta$.
\begin{definition}
Let $\lambda_0$ be a simple eigenvalue of an $n\times n$ matrix polynomial $P$. Then 
the $\delta$--weak condition number of $\lambda_0$ is defined as
\[
    \kappa_w(\delta) = \inf \{t\in \mathbb{R}: \mathbb{P}(\sigma_E < t)\geq 1-\delta \},
\]
\change{where} $\sigma_E$ is defined as in~\eqref{sigmaEformula} and $E\sim \mathcal{U}(2N)$.
\end{definition}
A crucial step in deriving bounds for $\kappa_w(\delta)$ is to exploit the unitary invariance of (complex) Gaussian random matrices. More specifically, since $\sigma_E$ is invariant under scaling of $E$, we can remove the normalization of $E$ and assume that the entries of its coefficients are i.i.d. normal random variables with mean $0$ and variance $\big( \sum^m_{j=0}|\lambda_0|^{2j} \big)^{-1}$. In turn, $E(\lambda_0)$ is a complex Gaussian random matrix. \emph{If} the columns of $\begin{bmatrix}
    X & x
\end{bmatrix}$ and $\begin{bmatrix}
    Y & y
\end{bmatrix}$ are orthonormal, the invariance of complex Gaussian random matrices under multiplication with unitary matrices implies that we can choose w.l.o.g. bases of unit vectors and hence 
\begin{equation*}
    \frac{\left| \det \left(\begin{bmatrix} Y & y \end{bmatrix}^*E(\lambda_0)\begin{bmatrix} X & x \end{bmatrix}\right) \right|}{\left| \det(Y^*E\left(\lambda_0)X\right) \right|} \sim \frac{\left| \det (G) \right|}{\left| \det(\overline{G}) \right|} \sim \frac{1}{|h_{\ell \ell}|}, \;\; \ell = n-r+1,
\end{equation*}
where $G$ is an $\ell \times \ell$ complex Gaussian matrix, $\overline{G}$ is the leading principal $(\ell-1)\times (\ell-1)$ submatrix of $G$, and $H = [h_{ij}]_{i,j=1}^n=G^{-1}$. Setting
\begin{equation}\label{eq:gammaP}
    \gamma_P = |y^*P'(\lambda_0)x| \change{/ \|\Lambda^m_0\|_2}
\end{equation}
\change{with
\begin{equation}\label{eq:lambda0}
 \Lambda_0^{m} = [\lambda_0^{m},\lambda_0^{m-1},\ldots,\lambda_0,1]^T,
\end{equation}}%
it follows from~\eqref{sigmaEformula} that
\[
 \mathbb{P}(\sigma_E\geq t) = \mathbb{P}\Big(  |h_{\ell\ell}|^{-2} \ge \gamma^2_P t^2 \|E\|^2 \change{\|\Lambda^m_0\|_2^2}  \Big).
\]
Deriving an explicit expression for the distribution of $|h_{\ell\ell}|^{-1}$, the following result was obtained in~\cite{lotz2020wilkinson}.
\begin{theorem} \label{t:sigmaEdistr}
Let $\lambda_0$ be a simple eigenvalue of an $n\times n$ matrix polynomial $P$ with normal rank $r$. Let $\gamma_P$ be defined as in~\eqref{eq:gammaP}. If $E\sim \mathcal{U}(2N)$ the directional sensitivity of $\lambda_0$ satisfies
\[
        \mathbb{P}(\sigma_E\geq t) = \mathbb{P}(Z_N/Z_{n-r+1}\geq \gamma_P^2t^2),
\]
    where $Z_k\sim \Bdist(1,k-1)$ denotes a beta-distributed random variable with parameters $1$ and $k-1$, and $Z_N$ and $Z_{n-r+1}$ are independent.
\end{theorem}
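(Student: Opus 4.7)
The excerpt reduces the theorem to showing
\[
\mathbb P\!\left(|h_{\ell\ell}|^{-2}\geq \gamma_P^2 t^2\,\|E\|^2\,s\right)=\mathbb P\!\left(\frac{Z_N}{Z_\ell}\geq \gamma_P^2 t^2\right),
\]
with $s:=\sum_{j=0}^m|\lambda_0|^{2j}$ and $\ell:=n-r+1$, once we adopt the Gaussian model with i.i.d.\ entries of variance $s^{-1}$ for $E$ and orthonormal columns in $[X,x]$, $[Y,y]$; in this model $G\in\C^{\ell\times \ell}$ is a standard complex Gaussian matrix. My plan is to factor $|h_{\ell\ell}|^{-2}/(s\|E\|^2)$ as a ratio of two independent Beta-distributed variables by isolating suitable one-dimensional Gaussian contributions.

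Partition $G=\bigl(\begin{smallmatrix}\bar G&b\\ c^T&d\end{smallmatrix}\bigr)$, set $A:=[\bar G,b]\in\C^{(\ell-1)\times \ell}$ and $\psi:=[c;d]\in\C^\ell$. Block inversion gives $|h_{\ell\ell}|^{-2}=|d-c^T\bar G^{-1} b|^2=|w^T\psi|^2$ with $w:=(-(\bar G^{-1} b)^T,1)^T\in \C^\ell$ satisfying $\|w\|^2=1+\|\bar G^{-1} b\|^2$; the identity $Aw=0$ exhibits $w$ as a spanning vector of the generically one-dimensional right null space of $A$. Define $E_1:=|w^T\psi|^2/\|w\|^2$ and $Z_\ell:=1/\|w\|^2$, so that $|h_{\ell\ell}|^{-2}=E_1/Z_\ell$. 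Since $\psi\sim N_\C(0,I_\ell)$ is independent of $A$ (hence of $w$), the coordinate $w^T\psi\sim N_\C(0,\|w\|^2)$ conditionally on $w$, giving $E_1\sim\mathrm{Exp}(1)=\mathrm{Gamma}(1)$ independent of $A$. The Hermitian identity $w^T\psi=\bar w^{\,*}\psi$ makes $E_1$ the squared magnitude of the projection of $\psi$ onto the unit vector $\bar w/\|w\|$; the orthogonal decomposition then yields $\eta:=\|\psi\|^2-E_1\sim\mathrm{Gamma}(\ell-1)$ independent of $E_1$ and of $A$. For $Z_\ell$: because $A$ is standard complex Gaussian, unitary invariance makes its right null line a uniformly random point of $\mathrm{Gr}(1,\C^\ell)$, so $w/\|w\|$ is uniform on the unit sphere in $\C^\ell$, and $Z_\ell=|w_\ell|^2/\|w\|^2\sim\Bdist(1,\ell-1)$ as the squared modulus of a single coordinate of such a uniform vector.

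To evaluate $E_1/(s\|E\|^2)$, decompose $s\|E\|^2=\|G\|_F^2+\|g''\|^2$, where $g''\sim N_\C(0,I_{N-\ell^2})$ is the portion of the rescaled Gaussian vector $\sqrt{s}\,\mathrm{vec}(E)\in\C^N$ orthogonal to the $\ell^2$-dimensional subspace determining $G$; it is independent of $G$ by construction. Combining with $\|G\|_F^2=\|A\|_F^2+\|\psi\|^2=\|A\|_F^2+E_1+\eta$ yields
\[
s\|E\|^2-E_1=\|A\|_F^2+\eta+\|g''\|^2.
\]
The four summands $E_1,\|A\|_F^2,\eta,\|g''\|^2$ are mutually independent (since $E_1,\eta$ are independent of $A$ by the earlier conditional argument and $g''$ is independent of $G$) and have distributions $\mathrm{Gamma}(1)$, $\mathrm{Gamma}(\ell(\ell-1))$, $\mathrm{Gamma}(\ell-1)$, $\mathrm{Gamma}(N-\ell^2)$. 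The exponents sum to $N$, so $s\|E\|^2-E_1\sim\mathrm{Gamma}(N-1)$ and $Z_N:=E_1/(s\|E\|^2)\sim\Bdist(1,N-1)$. For the independence $Z_N\perp Z_\ell$: writing the Gaussian thin SVD $A=U\Sigma V^*$, $Z_\ell$ is a function of $V$ alone (through the orthogonal complement of its columns), whereas $\|A\|_F^2=\sum_i\sigma_i^2$ is a function only of $\Sigma$; unitary invariance of Gaussian matrices yields $V\perp\Sigma$, and we already have $E_1,\eta,\|g''\|^2$ independent of $A$. Thus $Z_N$ is a function of random variables that are each independent of $V$, giving $Z_N\perp Z_\ell$ and finishing the proof.

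The main obstacle is the identification of $(1+\|\bar G^{-1} b\|^2)^{-1}$ with the squared modulus of a single coordinate of a uniform random vector on the complex unit sphere in $\C^\ell$---this relies on the slightly nonobvious fact that $w$ spans the null space of $A=[\bar G,b]$, combined with rotational invariance of the Gaussian; the remainder is a careful bookkeeping of independence, in which the Gaussian SVD is essential to decouple $Z_\ell$ from $\|A\|_F^2$.
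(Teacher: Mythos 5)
Your proposal is correct, but note that the paper itself does not prove this theorem: after reducing, via scale and unitary invariance, to the identity $\mathbb P(\sigma_E\ge t)=\mathbb P\big(|h_{\ell\ell}|^{-2}\ge \gamma_P^2t^2\|E\|^2\sum_j|\lambda_0|^{2j}\big)$, it simply quotes the distributional result from Lotz and Noferini. What you supply is the missing derivation, and it is sound: the block-inverse identity $|h_{\ell\ell}|^{-2}=|w^T\psi|^2$ with $Aw=0$, the conditional argument giving $E_1=|w^T\psi|^2/\|w\|^2\sim\mathrm{Gamma}(1)$ and $\eta=\|\psi\|^2-E_1\sim\mathrm{Gamma}(\ell-1)$ independent of $A$, the uniformity of the null direction of $A$ giving $Z_\ell=1/\|w\|^2\sim\Bdist(1,\ell-1)$, and the orthogonal decomposition $s\|E\|^2=\|A\|_F^2+E_1+\eta+\|g''\|^2$ with shape parameters summing to $N$, so that $Z_N=E_1/(s\|E\|^2)\sim\Bdist(1,N-1)$ and $Z_N/Z_\ell=|h_{\ell\ell}|^{-2}/(s\|E\|^2)$ exactly as needed; this is in the same geometric spirit as the original Lotz--Noferini argument (distance of a Gaussian row to an independent random subspace). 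Two small points you should tighten: (i) the step identifying $\|G\|_F^2$ with the squared norm of a projection of $\sqrt{s}\,\mathrm{vec}(E)$ onto an $\ell^2$-dimensional subspace deserves the one-line check that the functionals $E\mapsto y_p^*E(\lambda_0)x_q$, rescaled by $\sqrt{s}$, form an orthonormal system (this uses orthonormality of both $[Y\ y]$ and $[X\ x]$ together with the weights $\lambda_0^j$); (ii) the closing phrase ``$Z_N$ is a function of random variables that are each independent of $V$'' is too weak as stated, since pairwise independence from $V$ does not imply independence of a function of the vector -- what you need, and what your earlier steps in fact give, is the joint independence of $(E_1,\eta,\|g''\|^2,\Sigma)$ from $V$, obtained by combining $(E_1,\eta,\|g''\|^2)\perp A$ with $\Sigma\perp V$; phrase it that way and the proof is complete.
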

Using the result of Theorem~\ref{t:sigmaEdistr}, Lotz and Noferini~\cite{lotz2020wilkinson} compute the expected value of $\sigma_E$ and derive the tail bound    

$$\mathbb{P}(\sigma_E\geq t) \leq \frac{1}{\gamma^2_P}\frac{n-r}{N}\frac{1}{t^2}$$
\change{for} $t\geq \gamma_P^{-1}$. In turn, this yields the upper bound
\begin{equation}\label{eq:deltaWeakCondUpperBound}
\kappa_w(\delta) \leq \frac{1}{\gamma_P} \max \left\{  1,\sqrt{\frac{n-r}{\delta N}} \right\};
\end{equation}
see~\cite[Theorem 5.1]{lotz2020wilkinson}.
\begin{remark}
For a polynomial eigenvalue problem with real coefficient matrices, it would be more natural to work with  real perturbations. If $\lambda_0$ is real then $\begin{bmatrix}
    X & x
\end{bmatrix}$ and $\begin{bmatrix}
    Y & y
\end{bmatrix}$ can be chosen real and the discussion above can be easily adjusted to treat the case that $E$ is uniformly distributed on the unit sphere in $\R^{n^2(m+1)}$; the corresponding result is Theorem 5.2 in~\cite{lotz2020wilkinson}.

If $\lambda_0$ is complex then 
$\begin{bmatrix}
    X & x
\end{bmatrix}$ and $\begin{bmatrix}
    Y & y
\end{bmatrix}$ can \emph{not} be chosen real. As real Gaussian random matrices are clearly not invariant under (complex) unitary transformations, the arguments given above do not extend to this situation.
In turn, the proof (and possibly the results themselves) of Theorem 5.2 and related statements in~\cite{lotz2020wilkinson} are incorrect. For regular eigenvalue problems, it is known~\cite{Byers2004a} that restricting the perturbations from complex to real does not have a significant impact on the condition number. It remains open whether such a result can be established for weak condition numbers.

\end{remark}

\subsection{Linearization of singular \change{polynomial} eigenvalue problems} \label{sec:linearization}

As explained in the introduction, it is common to apply linearization when numerically solving polynomial eigenvalue problem.
\begin{definition}
A linear matrix polynomial (matrix pencil) $L(\lambda) = \lambda L_1 + L_0$ with $L_0,L_1 \in\mathbb{C}^{nm\times nm}$ is a \emph{linearization} of an $n\times n$ matrix polynomial $P(\lambda) = \sum^m_{i=0}\lambda^iA_i$ if there exist unimodular $nm\times nm$ matrices $E(\lambda)$ and $F(\lambda)$ such that
\[
    E(\lambda)L(\lambda)F(\lambda) = \begin{bmatrix}
    P(\lambda) & 0\\
    0 & I_{(m-1)n}
    \end{bmatrix}.
\]
\change{A} linearization $L(\lambda)$ is called a \emph{strong linearization} if $\rev L(\lambda) = \change{\lambda L_0 + L_1}$ is also a linearization of $\rev P(\lambda) = \sum^m_{i=0}\lambda^iA_{\change{m}-i}$.
\end{definition}
As emphasized in~\cite{de2009linearizations},
only strong linearizations \change{preserve} the complete eigenstructure of a singular matrix polynomial
and it is therefore preferable to work with them.
\change{From now on, we will focus on the \emph{first companion form linearization}
\begin{equation}\label{eq:c1general}
    C_1(\lambda) = \lambda \begin{bmatrix} A_m & && 0 \\  & I_n \\ && \ddots \\ 0 &&& I_n \end{bmatrix} + \begin{bmatrix} A_{m-1} & A_{m-2} & \cdots & A_0 \\ -I_n & 0 & \cdots & 0 \\  & \ddots & \ddots & \vdots \\ 0 &  & -I_n & 0 \end{bmatrix},
\end{equation}}%
which is known to be a strong linearization for both regular and singular matrix polynomials~ \cite{gohberg1988general,de2009linearizations}.
\change{For a} (singular) quadratic matrix polynomial
$Q(\lambda) = \lambda^2M + \lambda C + K$\change{, this takes the form}
\begin{equation}\label{eq:c1quad}
    C_1(\lambda) = \lambda \begin{bmatrix} M & 0\\ 0 & I_n \end{bmatrix} + \begin{bmatrix} C & K\\ -I_n & 0 \end{bmatrix},
\end{equation}

The recovery of eigenvectors of a (regular) matrix polynomial from the eigenvectors of its linearization has been systematically studied in~\cite{mackey2006vector}. Extending these results to the singular case, the recovery of minimal bases is discussed in~\cite{de2009linearizations}. Applied to the linearization $C_1(\lambda)$ of \change{$P(\lambda)$}, we obtain the following results.

\begin{paragraph}{Minimal basis of $\mathcal{N}(C_1)$ and right eigenvector.}
Given a minimal basis $\big[ x_1(\lambda),\ldots,x_{n-r}(\lambda)\big]$ of $\mathcal{N}(\change{P})$, Theorem 5.2 in~\cite{de2009linearizations} states that
\[\change{\begin{bmatrix}
 \lambda^{m-1} x_1(\lambda) & \ldots & \lambda^{m-1} x_{n-r}(\lambda) \\
\vdots & & \vdots \\
 \lambda x_1(\lambda) & \ldots & \lambda x_{n-r}(\lambda) \\
 x_1(\lambda) & \ldots & x_{n-r}(\lambda) 
\end{bmatrix}}
\]
is a \emph{minimal basis} of $\mathcal{N}(C_1)$. Setting $X = \big[ x_1(\lambda_0),\ldots,x_{n-r}(\lambda_0)\big]$ for a simple eigenvalue $\lambda_0$ of \change{$P$}, we thus obtain that \change{$\Lambda_0^{m-1}\otimes X$} is a basis of $\ker_{\lambda_0}(C_1)$, \change{where the vector $\Lambda_0^{m-1}$ is defined as in~\eqref{eq:lambda0} and `$\otimes$' denotes the Kronecker product}. Now, if $x$ is a right eigenvector of \change{$P$} in the sense of Definition~\ref{def:eigenvector} one easily verifies that $v := \change{\Lambda_0^{m-1} \otimes x}$
satisfies $C_1(\lambda_0) v = 0$ and $v \not\in \ker_{\lambda_0}(C_1)$. In other words, $v$ is a right eigenvector of $C_1$. To summarize,
\begin{equation}\label{eq:rightC1norm1}
    \begin{bmatrix}  X_L & x_L \end{bmatrix} := \change{ \big( \Lambda_0^{m-1} \otimes \begin{bmatrix} X & x  \end{bmatrix} \big) / \|\Lambda_0^{m-1}\|_2}
\end{equation}
is a basis of $\ker(C_1(\lambda_0))$ with the first $n-r$ columns being a basis of $\ker_{\lambda_0}(C_1)$. For later purposes, the scaling is chosen such that $\begin{bmatrix}  X_L & x_L \end{bmatrix}$ becomes orthonormal if 
$\begin{bmatrix} X  & x \end{bmatrix}$ is orthonormal.
\end{paragraph}

\begin{paragraph}{Minimal basis of $\mathcal{N}(C_1^*)$ and left eigenvector.}
 Let $\big[ y_1(\lambda),\ldots,y_{n-r}(\lambda)\big]$ be a minimal basis of $\mathcal{N}(\change{P}^*)$.
 Then 
 \[
  C_1(\lambda)^* \change{\mathcal P(\lambda)^*y_i(\lambda)}= 0
 \]
 for $i = 1,\ldots,n-r$, with the $n\times mn$ matrix $\mathcal P(\lambda) = [P_0(\lambda),\ldots, P_{m-1}(\lambda) ]$ defined by
 \[
  \change{P_0(\lambda) = I_n, \quad P_j(\lambda) = \lambda^j A_m + \lambda^{j-1} A_{m-1} + \cdots + A_{m-j}, \quad j = 1,\ldots,m-1. }
 \]
Together with Theorem 5.7 in~\cite{de2009linearizations} this implies that
$
\change{\mathcal P(\lambda)^*\big[ y_1(\lambda),\ldots,y_{n-r}(\lambda)\big]}
$
is a minimal basis of $\mathcal{N}(C_1^*)$.
Similarly, it follows that if $y$ is a left eigenvector of \change{$P$} associated with $\lambda_0$ then 
\change{$\mathcal P(\lambda_0)^*y$}
is a left eigenvector of $C_1$; \change{see also~\cite[Lemma 7.2]{higham2006conditioning}}.
Letting
$Y = \big[ y_1(\lambda_0),\ldots,y_{n-r}(\lambda_0)\big]$ we have thus obtained that 
\begin{equation} \label{eq:LeftC1}
 \begin{bmatrix}
   \tilde Y_L & \tilde y_L
   \end{bmatrix} = \change{\mathcal P(\lambda_0)^* 
 \begin{bmatrix} Y & y \end{bmatrix}},
\end{equation}
is a basis of $\ker(C_1(\lambda_0)^*)$ with the first $n-r$ columns being a basis of $\ker_{\lambda_0}(C_1^*)$. If $\begin{bmatrix} Y  & y \end{bmatrix}$ is orthonormal, an orthonormal basis $\begin{bmatrix} Y_L  & y_L \end{bmatrix}$ of $\ker(C_1(\lambda_0)^*)$ with the same property is obtained by setting
\begin{equation}  \label{eq:LeftC1orth}
 Y_L:= \tilde Y_L (\tilde Y_L^* \tilde Y_L)^{-1/2} = \change{\mathcal P(\lambda_0)^*Y}S, \quad 
y_L:= \frac{1}{\beta} \Pi_\perp\tilde y_L = \frac{1}{\beta} \Pi_\perp \change{\mathcal P(\lambda_0)^* y},
\end{equation}
with 
the matrix $
            \change{S = \big( I + Y^* P_1(\lambda) P_1(\lambda)^* Y + \cdots + Y^* P_{m-1}(\lambda) P_{m-1}(\lambda)^* Y \big)^{-1/2},}
           $
the orthogonal projector $\Pi_\perp := I - Y_L Y_L^*$ onto $\ker_{\lambda_0}(C_1^*)^\perp$, and
\change{\begin{equation}\label{eq:beta}
\beta = \|\Pi_\perp\tilde y_L\|_2.
\end{equation}}
\end{paragraph}

\section{$\delta$--weak condition number of the linearization}\label{s:DeltaCNLin}

In this section we quantify the impact of the linearization \eqref{eq:c1general} on $\delta$--weak eigenvalue condition numbers. More precisely, if $\kappa^{(\change{P})}_w(\delta)$ and
$\kappa^{(C_1)}_w(\delta)$ denote the $\delta$--weak condition numbers of a simple eigenvalue $\lambda_0$ for \change{a polynomial eigenvalue problem $P$ and its} linearization~\eqref{eq:c1general}, respectively, we aim at determining an upper bound on the ratio
\begin{equation} \label{eq:ratio}
    \frac{\kappa^{(C_1)}_w(\delta)}{\kappa^{(\change{P})}_w(\delta)}.
\end{equation}
For regular matrix polynomials, this question was first addressed in~\cite{higham2006conditioning}, followed up by a more detailed study of the quadratic case in~\cite{zeng2014backward}. 

To address the singular case, we face two complications. First, the relation between the eigenvectors of $C_1$ and $\change{P}$ is more subtle due to the presence of singular spaces; see, e.g.,~\eqref{eq:LeftC1orth}. 
Second, in order to get an upper bound for~\eqref{eq:ratio} we also need a \emph{lower} bound for the denominator $\kappa^{(\change{P})}_w(\delta)$, which requires an extension of the analysis in~\cite{lotz2020wilkinson}.

\subsection{Lower bound for $\delta$--weak condition number}

In this section, we establish a lower bound for $\delta$--weak eigenvalue condition numbers.
\begin{theorem}\label{t:deltaWeakCondLowerBound}
Let $\lambda_0$ be a simple eigenvalue of an $n\times n$ matrix polynomial $P$ of the form~\eqref{eq:genpoly} with normal rank $r<n$, let $\gamma_P$ be defined as in~\eqref{eq:gammaP}, and set $N = n^2(m+1)$. Then the $\delta$--weak eigenvalue condition number of $\lambda_0$ satisfies
\begin{equation}\label{eq:deltaWeakCondLowerBound}
    \kappa^{\change{(P)}}_w(\delta) \geq \sqrt{\frac{(N-1)(n-r)}{(N+n-r-2)(N+n-r-1)\delta}}\frac{1}{\gamma_P} \ge \frac{1}{\sqrt{N\delta}} \frac{1}{\gamma_P},
\end{equation}
for $\delta \leq \frac{(N-1)(n-r)}{(N+n-r-2)(N+n-r-1)}$.
\end{theorem}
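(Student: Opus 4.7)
My plan is to reduce the claim to a lower bound on the tail of the beta ratio $W := Z_N/Z_{n-r+1}$ from Theorem~\ref{t:sigmaEdistr}, and then to establish that tail bound by combining an explicit integral representation with a short monotonicity argument. Since $\mathbb{P}(\sigma_E \ge t) = \mathbb{P}(W \ge \gamma_P^2 t^2)$, putting $s := \gamma_P^2 t^2$ shows that the desired inequality on $\kappa_w(\delta)$ is equivalent to
\[
    s\cdot\mathbb{P}(W \ge s) \;\ge\; \frac{(N-1)(n-r)}{(N+n-r-2)(N+n-r-1)} \qquad \text{for every } s \ge 1,
\]
because the restriction on $\delta$ in the statement translates precisely into $s\ge 1$.

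To obtain a tractable expression for the tail I would condition on $Z_{n-r+1}$ and use the survival function $\mathbb{P}(Z_N \ge a) = (1-a)_+^{N-1}$ of $\mathrm{Beta}(1,N-1)$, which gives
\[
    \mathbb{P}(W \ge s) = \mathbb{E}\bigl[(1- s Z_{n-r+1})_+^{N-1}\bigr] = (n-r)\int_0^{\min(1,1/s)}(1-sz)^{N-1}(1-z)^{n-r-1}\,\dz.
\]
For $s \ge 1$ the substitution $w = sz$ then yields the clean form
\[
    s\cdot\mathbb{P}(W \ge s) \;=\; (n-r)\int_0^1 (1-w)^{N-1}\bigl(1- w/s\bigr)^{n-r-1}\,\mathrm{d}w.
\]

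Because $n-r \ge 1$, differentiating $(1-w/s)^{n-r-1}$ in $s$ gives $(n-r-1)(1-w/s)^{n-r-2}\,w/s^2 \ge 0$ on $w\in[0,1]$, $s \ge 1$ (the degenerate case $n-r=1$ simply contributes the constant factor $1$). Hence $s\cdot\mathbb{P}(W \ge s)$ is nondecreasing on $[1,\infty)$ and attains its minimum there at $s=1$, where
\[
    \mathbb{P}(W\ge 1) = (n-r)\int_0^1(1-w)^{N+n-r-2}\,\mathrm{d}w = \frac{n-r}{N+n-r-1}.
\]
The main inequality of the theorem then follows from the elementary observation $(n-r)/(N+n-r-1) \ge (N-1)(n-r)/[(N+n-r-2)(N+n-r-1)]$, which is just $N+n-r-2 \ge N-1$, i.e.\ $n-r\ge 1$. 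The cruder second inequality $\kappa_w(\delta) \ge 1/(\sqrt{N\delta}\,\gamma_P)$ is obtained directly from the intermediate bound $\kappa_w(\delta)^2 \ge (n-r)/[(N+n-r-1)\delta\gamma_P^2]$, since $N(n-r) \ge N+n-r-1$ whenever $n-r\ge 1$.

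The main subtlety, rather than a true obstacle, is recognising that a naive Paley--Zygmund or Markov attack on $W$ itself fails: $\mathbb{E}[1/Z_{n-r+1}]=\infty$, hence $\mathbb{E}[W]=\infty$ and no two-moment lower bound on the tail is available. One must therefore first integrate $Z_N$ out against the survival function of the other beta, producing a tail formula whose $s$-dependence becomes transparent; after that, both the monotonicity argument and the boundary evaluation at $s=1$ are completely routine.
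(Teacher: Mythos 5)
Your proof is correct, and it reaches the stated bounds by a genuinely different route for the key tail estimate. The paper also reduces everything, via Theorem~\ref{t:sigmaEdistr}, to a lower bound on the tail of $W=Z_N/Z_{n-r+1}$, but it obtains that bound from a general-purpose lemma (Lemma~\ref{l:TailBound}, a variant of Lotz--Noferini's Lemma A.1) valid for arbitrary beta parameters: a double-integral representation, the substitution $y=t^{-k}xz$, and the pointwise bound $(1-t^{-k}xz)^{d-1}\ge(1-x)^{d-1}$, giving the constant $\Bfun(a+c,b+d-1)/[c\,\Bfun(a,b)\Bfun(c,d)]$, which for the relevant parameters is exactly $\frac{(N-1)(n-r)}{(N+n-r-2)(N+n-r-1)}$. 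You instead exploit that both betas have first parameter $1$: integrating the explicit survival function $(1-a)_+^{N-1}$ against the density of $Z_{n-r+1}$, substituting $w=sz$, and using monotonicity of $s\,\mathbb{P}(W\ge s)$ on $[1,\infty)$ with exact evaluation at $s=1$ yields the sharper constant $\frac{n-r}{N+n-r-1}$, from which the paper's constant (and the cruder $1/N$ bound) follow by the elementary comparisons you give; so your argument is less general but tighter and arguably more transparent, and your observation that Markov/Paley--Zygmund cannot work (since $\mathbb{E}[W]=\infty$) correctly identifies why a tail representation is needed. Two minor points: the reduction is an implication (tail lower bound for $s\ge1$, plus monotonicity of the cdf to cover $t<1/\gamma_P$, implies $\kappa_w(\delta)\ge t_0$) rather than a literal equivalence, though this is at the same level of rigor as the paper's own "setting $t=t_0$" step; and you verify $\kappa_w(\delta)\ge$ both displayed quantities separately rather than the middle inequality of the chain itself, which is a purely algebraic fact (the paper disposes of it by monotonicity in $n-r$, with the extreme case $r=n-1$ giving equality), so nothing of substance is missing.
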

In order to prove Theorem~\ref{t:deltaWeakCondLowerBound}, we need the following result, a variation of Lemma A.1 in~\cite{lotz2020wilkinson}.
\begin{lemma}\label{l:TailBound}
For $a,b,c,d>0$ consider independent beta-distributed random variables $X\sim \Bdist(a,b)$, $Y\sim \Bdist(c,d)$. Then
\[
    \mathbb{P}((X/Y)^{1/k}<t)\leq 1-\frac{1}{t^{ck}} \frac{\Bfun(a+c,b+d-1)}{c \Bfun(a,b)B(c,d)},
\]
holds for $t\geq 1$, where $\Bfun(\cdot,\cdot)$ denotes the beta function.
\end{lemma}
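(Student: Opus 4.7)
The plan is to reduce the tail bound to a lower bound on $\mathbb{P}(X \geq sY)$ with $s = t^k \geq 1$, via the identity $\mathbb{P}((X/Y)^{1/k} < t) = 1 - \mathbb{P}(X \geq sY)$ (valid since $Y \geq 0$ almost surely). I would write this probability as the double integral of the product density of $X$ and $Y$ over the region $\{(x,y)\in[0,1]^2 : x \geq sy\}$, and then apply the change of variables $y = wx$. The Jacobian is $x$, and the image of the constraint region is the rectangle $(w,x) \in [0,1/s]\times[0,1]$, giving
\[
\mathbb{P}(X \geq sY) = \frac{1}{\Bfun(a,b)\Bfun(c,d)} \int_0^{1/s} w^{c-1} \int_0^1 x^{a+c-1}(1-x)^{b-1}(1-wx)^{d-1}\,dx\,dw.
\]

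The outer integral in $w$ evaluates to $1/(cs^c)$. For the inner integral, I would use the pointwise estimate $(1-wx)^{d-1} \geq (1-x)^{d-1}$, which follows from $wx \leq x$ together with monotonicity of $u\mapsto(1-u)^{d-1}$; this relies on $d\geq 1$, consistent with the parameter regime in which $\Bfun(a+c,b+d-1)$ is needed and with the application ($d = n-r \geq 1$). After this bound, the inner integral becomes at least $\int_0^1 x^{a+c-1}(1-x)^{b+d-2}\,dx = \Bfun(a+c, b+d-1)$. Assembling the three factors and substituting back $s = t^k$ yields the stated upper bound on $\mathbb{P}((X/Y)^{1/k} < t)$.

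The main subtle point is the choice of substitution. The more natural ratio substitution $r = x/y$ does not convert the constraint into a product region and leaves an integrand that is awkward to bound in the direction required for a \emph{lower} bound on the probability. In contrast, $y = wx$ simultaneously decouples the integration region and concentrates the joint dependence on $(w,x)$ into the single factor $(1-wx)^{d-1}$, which can then be estimated cleanly to produce exactly the beta constant $\Bfun(a+c,b+d-1)/c$ appearing in the statement. This argument is the mirror image of the one behind Lemma~A.1 in~\cite{lotz2020wilkinson}, where the analogous factor is upper-bounded rather than lower-bounded.
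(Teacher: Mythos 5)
Your proof is correct and follows essentially the same route as the paper: passing to the complement $1-\mathbb{P}(X\ge t^kY)$, decoupling the integration region via the substitution $y=wx$ (the paper's $y=t^{-k}xz$ is the same change of variables up to rescaling $w=t^{-k}z$), and applying the identical pointwise bound $(1-wx)^{d-1}\ge(1-x)^{d-1}$ to produce $\Bfun(a+c,b+d-1)/c$. Your explicit remark that this monotonicity step needs $d\ge 1$ (satisfied in the application, $d=n-r$) is a caveat the paper's proof shares but leaves implicit.
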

\begin{proof}
We recall that the cdf of $\Bdist(a,b)$ is supported on $[0,1]$ and takes the form
\[
 \mathbb{P}(X \le t_0)= \frac{1}{\Bfun(a,b)} \int_0^{t_0} x^{a-1} (1-x)^{b-1}  \dx
\]
Setting $C = \frac{1}{\Bfun(a,b)\Bfun(c,d)}$ we obtain
\begin{align*}
    \mathbb{P}((X/Y)^{1/k}<t) &= 1-C\int^1_0\int^{t^{-k}x}_{0} x^{a-1}(1-x)^{b-1}y^{c-1}(1-y)^{d-1}\dy\dx \\
    & = 1 - \frac{C}{t^{ck}}\int^1_0\int^{1}_0x^{a+c-1}(1-x)^{b-1}z^{c-1}(1-t^{-k}xz)^{d-1}\dz\dx\\
    &\leq 1-\frac{C}{t^{ck}}\int^1_0\int^1_{0}x^{a+c-1}(1-x)^{(b+d-1)-1}z^{c-1}\dz\dx \\
    &= 1-\frac{C}{t^{ck}}\frac{1}{c}\Bfun(a+c,b+d-1) = 1-\frac{1}{t^{ck}}\frac{1}{c}\frac{\Bfun(a+c,b+d-1)}{\Bfun(a,b)\Bfun(c,d)},
\end{align*}
where we substituted $y = t^{-k}x z$ in the second equality
and used that $t\geq 1$ implies $t^{-k}xz\leq x$ in the inequality.
\end{proof}
\begin{proof}[of Theorem~\ref{t:deltaWeakCondLowerBound}]
We recall from Theorem~\ref{t:sigmaEdistr} that
$
 \mathbb{P}(\sigma_E<t) = \mathbb{P}(Z_N/Z_{n-r+1}< \gamma^2_Pt^2),
$
where $Z_k \sim \Bdist(1,k-1)$. Applying Lemma~\ref{l:TailBound} with 
$k=2$, $a = c = 1$, $b=N-1$, $d=n-r$ gives
\[
 \mathbb{P}(\sigma_E<t) \leq 1-\frac{1}{\gamma^2_Pt^2} \frac{\Bfun(2,N+n-r-2)}{\Bfun(1,N-1)\Bfun(1,n-r)} = 1-\frac{1}{\gamma^2_Pt^2} \frac{(N-1)(n-r)}{(N+n-r-2)(N+n-r-1)}
\]
if $\gamma_Pt \ge 1$. Setting $t = t_0:= {\sqrt{\frac{(N-1)(n-r)}{(N+n-r-2)(N+n-r-1)\delta}}}\frac{1}{\gamma_P}$ thus gives $\mathbb{P}(\sigma_E<t_0) \le 1-\delta$, which shows $\kappa^{\change{(P)}}_w(\delta)\geq t_0$ and proves the first inequality in~\eqref{eq:deltaWeakCondLowerBound}. The second inequality is shown by noticing that $t_0$ increases monotonically with $r$
and $t_0 = \frac{1}{\sqrt{N\delta}} \frac{1}{\gamma_P}$ for $r = n-1$.
\end{proof}

\subsection{Bounds for $\delta$--weak condition number ratio}

We now have all the ingredients to state and prove one of the main results of this paper, which bounds the impact of the first companion form linearization on $\delta$--weak condition numbers. \change{The following lemma relates the quantities $\gamma_P$ and $\gamma_{C_1}$, an essential ingredient in relating weak condition numbers.
\begin{lemma}\label{t:FCFratiogeneral}
Let $\lambda_0$ be a simple eigenvalue of a matrix polynomial $P$ taking the form~\eqref{eq:genpoly}, and consider 
the first companion form linearization $C_1$ of $P$. Then the quantities $\gamma_P$ and $\gamma_{C_1}$ defined in~\eqref{eq:gammaP}
satisfy 
\[
 \frac{\gamma_P}{\gamma_{C_1}} = \frac{\beta \|\Lambda_0^1\|_2 \|\Lambda_0^{m-1}\|_2}{\|\Lambda_0^m\|_2},
\]
with $\beta$ and {$\Lambda_0^j$} defined as in~\eqref{eq:beta} and~\eqref{eq:lambda0}, respectively.
\end{lemma}
\begin{proof}
Let $\begin{bmatrix}
    X_L & x_L
\end{bmatrix}$ and $\begin{bmatrix}
    Y_L & y_L
\end{bmatrix}$ be the orthonormal bases of $\ker_{\lambda_0}(C_1)$ and $\ker_{\lambda_0}(C_1^*)$
defined in~\eqref{eq:rightC1norm1} and~\eqref{eq:LeftC1orth}, respectively. 
We proceed as in~\cite{higham2006conditioning} and consider the relation \begin{equation} \label{eq:linrelation}
C_1(\lambda) (\Lambda^{m-1}\otimes I_n) = 
\begin{bmatrix}
P(\lambda) \\
0
\end{bmatrix},\quad  \Lambda^{m-1} = [\lambda^{m-1},\ldots,\lambda,1]^T.
\end{equation}
Differentiating with respect to $\lambda$ at $\lambda_0$ and using that $y_L^* C_1(\lambda) = 0$ gives 
\[
 y_L^* C^\prime_1(\lambda) (\Lambda_0^{m-1}\otimes I_n) =  y_L^* \begin{bmatrix}
P^\prime(\lambda_0) \\
0
\end{bmatrix}.
\]
Multiplying with $x$ and inserting the expressions~\eqref{eq:rightC1norm1},~\eqref{eq:LeftC1orth} for $x_L,y_L$  yields
\begin{equation} \label{eq:expressionlincondnumber}
y_L^*C'_1(\lambda_0)x_L = 
\frac{1}{\beta \|\Lambda_0^{m-1}\|_2} (\Pi_\perp \tilde y_L)^* \begin{bmatrix}
P'(\lambda_0)x \\
0
\end{bmatrix}. 
\end{equation}
To proceed from here, we consider a basis $Y(\lambda)$ of the left null space such that $Y = Y(\lambda_0)$ and set $Z(\lambda) = Y(\lambda)^*$, which is polynomial in $\lambda$. Differentiating $Z(\lambda) Q(\lambda) = 0$ gives \[
0 = Z^\prime(\lambda_0) P(\lambda_0)x  + Z(\lambda_0) P^\prime(\lambda_0) x = Y^* P^\prime(\lambda_0) x.
\]
Using this relation and the definition of $Y_L$ from~\eqref{eq:LeftC1orth} gives
\begin{align*}
  \Pi_\perp \begin{bmatrix}
P'(\lambda_0)x \\
0
\end{bmatrix} &=
\begin{bmatrix}
P'(\lambda_0)x \\
0
\end{bmatrix} - Y_L S^* Y^* \mathcal P(\lambda_0) \begin{bmatrix}
P'(\lambda_0)x \\
0
\end{bmatrix} \\
&= \begin{bmatrix}
P'(\lambda_0)x \\
0
\end{bmatrix} - Y_L S^* Y^*P'(\lambda_0)x = \begin{bmatrix}
P'(\lambda_0)x \\
0
\end{bmatrix}.
\end{align*}
Inserting this relation into~\eqref{eq:expressionlincondnumber} and exploiting the structre of $\tilde y_L$ finally gives
\[
 y_L^*C'_1(\lambda_0)x_L = \frac{1}{\beta\|\Lambda_0^{m-1}\|_2} y^*P'(\lambda_0)x.
\]
Inserting this relation into $\gamma_P / \gamma_{C_1} = |y^*P'(\lambda_0)x| / |y_L^*C'_1(\lambda_0)x_L| \cdot \|\Lambda_0^1\|_2 / \|\Lambda_0^m\|_2$  completes the proof.
\end{proof}}%

\change{The presence of the factor $\beta$ in the result of Lemma~\ref{t:FCFratiogeneral} makes it difficult to derive simple and insightful bounds for general polynomials. We will therefore restrict the following considerations to a quadratic polynomial $Q(\lambda) = \lambda^2 M + \lambda C + K$. Moreover, we assume that}
 $\|M\|_2 = \|K\|_2 = 1$, which can always be achieved by appropriate scaling~\cite{Fan2004,zeng2014backward}.
 \change{The following theorem shows that the linearization $C_1(\lambda)$ does not increase the weak condition number significantly when
 $|\lambda_0|\geq 1$ or $|\lambda_0|< 1$ and $\|C\|_2 \leq 1$. In the missing case, when $|\lambda_0|$ is small and $\|C\|_2$ is large, it is preferable to use a different linearization, which will be discussed below.}
  \begin{theorem}\label{t:FCFratio}
    Let $\lambda_0$ be a simple eigenvalue of an $n \times n$ quadratic eigenvalue problem $Q(\lambda) = \lambda^2 M + \lambda C + K$ with normal rank $r<n$ and $\|M\|_2 = \|K\|_2 = 1$. Let $C_1(\lambda)$ be the first companion form linearization of $Q(\lambda)$. Then, for $\delta \leq \frac{n-r}{8n^2}$ there exist constants $c_1 \approx 1.46, c_2 \approx 1.08$ such that
\[
        \frac{\kappa^{(C_1)}_w(\delta)}{\kappa^{(Q)}_w(\delta)} \leq \begin{cases}
        c_1 & \text{if } \change{|\lambda_0|< 1}, \|C\|_2 \leq 1,\\
        c_2 & \text{if } |\lambda_0|\geq 1.
        \end{cases}
\]
\end{theorem}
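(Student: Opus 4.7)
The strategy is to bound $\kappa^{(C_1)}_w(\delta)$ from above via~\eqref{eq:deltaWeakCondUpperBound} and $\kappa^{(Q)}_w(\delta)$ from below via Theorem~\ref{t:deltaWeakCondLowerBound}. By the minimal-basis construction in Section~\ref{sec:linearization}, $\dim \mathcal{N}(C_1) = \dim \mathcal{N}(Q) = n-r$, so the parameters entering the two bounds are $N_Q = 3n^2$ and $N_{C_1} = 8n^2$, with the same $n-r$. The condition $\delta \le (n-r)/(8n^2)$ ensures that the maximum in~\eqref{eq:deltaWeakCondUpperBound} applied to $C_1$ is attained by $\sqrt{(n-r)/(\delta N_{C_1})}$. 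Dividing the two bounds gives
\[
\frac{\kappa^{(C_1)}_w(\delta)}{\kappa^{(Q)}_w(\delta)} \le \frac{\gamma_Q}{\gamma_{C_1}}\, \sqrt{\frac{(3n^2+n-r-2)(3n^2+n-r-1)}{8 n^2 (3 n^2 - 1)}},
\]
and the rightmost combinatorial factor is bounded uniformly in $(n,r)$ by a constant close to $\sqrt{3/8}$.

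The bulk of the work is estimating $\gamma_Q/\gamma_{C_1}$. Setting $A := \lambda_0 M + C$ and using the orthonormal bases~\eqref{eq:rightC1norm1} and~\eqref{eq:LeftC1orth}, the elementary identity
\[
\tilde y_L^* \begin{bmatrix} \lambda_0 M x \\ x \end{bmatrix} = y^*(2\lambda_0 M + C) x = y^* Q'(\lambda_0) x,
\]
together with $\tilde y_L^* \Pi_\perp = \beta y_L^*$ and $Y^* y = 0$, produces
\[
y_L^* C_1'(\lambda_0) x_L = \frac{1}{\beta\sqrt{1+|\lambda_0|^2}}\Bigl(y^* Q'(\lambda_0) x - y^* A A^* Y\,(I + Y^* A A^* Y)^{-1} Y^* Q'(\lambda_0) x\Bigr).
\]
In the regular case ($Y$ empty) the correction vanishes, $\beta = \|\tilde y_L\|_2 = \sqrt{1 + \|A^* y\|_2^2}$, and one recovers the classical identity of Higham et al.~\cite{higham2006conditioning}:
\[
\frac{\gamma_Q}{\gamma_{C_1}} = \frac{\sqrt{1+\|A^* y\|_2^2}\,(1+|\lambda_0|^2)}{\sqrt{1+|\lambda_0|^2+|\lambda_0|^4}}.
\]
The key technical obstacle is to show that this upper bound continues to hold in the singular case. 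Since the positive semidefinite matrix $T := Y^* A A^* Y$ enters both the correction and the formula $\beta^2 = 1 + \|A^* y\|_2^2 - y^* A A^* Y (I+T)^{-1} Y^* A A^* y$, a Cauchy--Schwarz argument is expected to show that the decrease of the numerator is offset by the simultaneous decrease of $\beta$ below $\|\tilde y_L\|_2$.

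Once the estimate on $\gamma_Q/\gamma_{C_1}$ is in place, the case analysis is routine. In case~1 ($|\lambda_0|\le 1$, $\|C\|_2 \le 1$) the triangle inequality gives $\|A^* y\|_2 \le |\lambda_0|+\|C\|_2 \le 2$, while on $[0,1]$ one has $(1+t^2)/\sqrt{1+t^2+t^4} \le \sqrt{2}$. In case~2 ($|\lambda_0|\ge 1$), the left-eigenvector relation $y^* Q(\lambda_0) = 0$ rearranges to $y^* A = -y^* K/\lambda_0$, and therefore $\|A^* y\|_2 \le \|K\|_2/|\lambda_0| \le 1$, \emph{without any hypothesis on $\|C\|_2$}; together with $(1+t^2)/\sqrt{1+t^2+t^4} \le 2/\sqrt{3}$ on $[1,\infty)$ (attained at $t=1$), this yields a visibly smaller bound than in case~1. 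Substituting both estimates into the combinatorial ratio of the first paragraph produces the claimed constants $c_1 \approx 1.46$ and $c_2 \approx 1.08$.
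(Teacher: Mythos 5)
Your overall route is the same as the paper's: bound $\kappa_w^{(C_1)}(\delta)$ above by \eqref{eq:deltaWeakCondUpperBound} with $N_{C_1}=8n^2$, bound $\kappa_w^{(Q)}(\delta)$ below by Theorem~\ref{t:deltaWeakCondLowerBound} with $N_Q=3n^2$, use that the linearization preserves the nullspace dimension $n-r$, and then estimate $\gamma_Q/\gamma_{C_1}$ through the bases \eqref{eq:rightC1norm1} and \eqref{eq:LeftC1orth}. The decisive step, however, is left unproven. You correctly compute $y_L^*C_1'(\lambda_0)x_L = \frac{1}{\beta\sqrt{1+|\lambda_0|^2}}\bigl(y^*Q'(\lambda_0)x - y^*AA^*Y(I+T)^{-1}Y^*Q'(\lambda_0)x\bigr)$ with $A=\lambda_0M+C$, $T=Y^*AA^*Y$, but then defer the correction term to an unspecified Cauchy--Schwarz argument. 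No norm-based argument can close this: a priori the correction could (nearly) cancel $y^*Q'(\lambda_0)x$, making $\gamma_{C_1}$ arbitrarily small relative to $\gamma_Q$, and the simultaneous decrease of $\beta$ does not prevent this, since the correction involves $Y^*Q'(\lambda_0)x$, a quantity $\beta$ does not control. The paper's resolution is structural, not metric: each column $y_i$ of $Y$ is the evaluation at $\lambda_0$ of an element $y_i(\lambda)$ of the left nullspace, so differentiating $y_i(\lambda)^*Q(\lambda)\equiv 0$ at $\lambda_0$ and using $Q(\lambda_0)x=0$ gives $Y^*Q'(\lambda_0)x=0$. Hence your correction term vanishes identically, yielding the exact identity $y_L^*C_1'(\lambda_0)x_L=\frac{1}{\beta\sqrt{1+|\lambda_0|^2}}\,y^*Q'(\lambda_0)x$, i.e.\ $\gamma_Q/\gamma_{C_1}=\beta(1+|\lambda_0|^2)/\sqrt{1+|\lambda_0|^2+|\lambda_0|^4}$ with $\beta\le\|\tilde y_L\|_2=\sqrt{1+\|A^*y\|_2^2}$ as in \eqref{eq:fracgamma}--\eqref{eq:minimum}. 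This observation is the missing idea; without it the theorem is not established.

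There is also a quantitative problem in your first case. For $|\lambda_0|\le 1$, $\|C\|_2\le 1$ you only invoke the triangle inequality $\|A^*y\|_2\le|\lambda_0|+\|C\|_2\le 2$, which gives $\sqrt{1+\|A^*y\|_2^2}\le\sqrt5$; combined with your factor bound ($\sqrt2$, or even the sharp $2/\sqrt3$ on $[0,1]$, attained at $|\lambda_0|=1$) and the combinatorial factor this lands near $1.7$--$2.1$, not $1.46$. To reach $c_1\approx1.46$ you must use, in this case too, the left-eigenvector relation $A^*y=-\bar\lambda_0^{-1}K^*y$ (the one you reserve for $|\lambda_0|\ge1$), take the minimum of the increasing bound $\sqrt{1+(|\lambda_0|+\|C\|_2)^2}$ and the decreasing bound $\sqrt{1+|\lambda_0|^{-2}}$, and balance them at $|\lambda_0|=\tfrac12\bigl(-\|C\|_2+\sqrt{\|C\|_2^2+4}\bigr)$, giving $\beta\le\sqrt{(5+\sqrt5)/2}$ (with the case $\lambda_0=0$ noted separately); that is how the paper obtains $c_1$. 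Two smaller points: the combinatorial factor is not bounded by a constant ``close to $\sqrt{3/8}$'' --- its maximum over admissible $(n,r)$ is $\sqrt{39}/(2\sqrt{22})\approx0.67$, attained at $n=2$, $r=0$, and this is the value needed to reproduce $1.46$ and $1.08$; and your case $|\lambda_0|\ge1$ (via $\|A^*y\|_2\le\|K\|_2/|\lambda_0|\le1$) does match the paper's argument and constant.
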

\begin{proof}
\change{For $m = 2$, the result of Lemma~\ref{t:FCFratiogeneral} reads
\begin{equation} 
 \label{eq:fracgamma}
  \frac{\gamma_Q}{\gamma_{C_1}} = \frac{\beta \|\Lambda_0^1\|^2_2}{\|\Lambda_0^2\|_2} = \frac{\beta \sqrt{1+|\lambda_0|^2}}{\sqrt{1+|\lambda_0|^2+|\lambda_0|^4}},
\end{equation}}%
Because of $\|y\|_2 = 1$, the \change{factor $\beta$} satisfies
\begin{eqnarray} 
\beta &=& \left\| \Pi_{\perp} \begin{bmatrix}
        y\\
        (\lambda_0M+C)^*y
\end{bmatrix}\right\|_2 \le  \left\| \begin{bmatrix}
        y\\
        (\lambda_0M+C)^*y
\end{bmatrix}\right\|_2  = 
\left\| \begin{bmatrix}
        y\\
        - \bar \lambda_0^{-1} K^* y
\end{bmatrix}\right\|_2 \nonumber \\
&\le& \min \left\{ \sqrt{1+(|\lambda_0|+\|C\|_2)^2}, \sqrt{1+|\lambda_0|^{-2}} \right\}, \label{eq:minimum}
\end{eqnarray}
where we assume, for the moment, that $\lambda_0\not=0$.

To bound $\gamma_Q/\gamma_{C_1}$, we first note that the factor $\frac{1+|\lambda_0|^2}{\sqrt{1+|\lambda_0|^2+|\lambda_0|^4}}$ in~\eqref{eq:fracgamma} is bounded by $2/\sqrt{3}$ because its maximum is attained at $|\lambda_0| = 1$. When assuming $|\lambda_0|\geq 1$,~\eqref{eq:fracgamma}  yields
    \[
        \frac{\gamma_Q}{\gamma_{C_1}} 
        \leq \frac{2}{\sqrt{3}} \sqrt{1+|\lambda_0|^{-2}} \leq \frac{2\sqrt{2}}{\sqrt{3}} \approx 1.63.
    \]
Assume now that \change{$|\lambda_0|< 1$} and $\|C\|_2 \leq 1$. As the first term in the minimum~\eqref{eq:minimum} is increasing and the other is decreasing with respect to $|\lambda_0|$, the maximum of~\eqref{eq:minimum} is attained when both terms are equal, that is, when 
$|\lambda_0| = \frac12(-\|C\|_2+\sqrt{\|C\|_2^2+4})$. In turn,
$\beta \le \sqrt{ (5+\sqrt{5})/2}$, which also holds when $\lambda_0 = 0$. Inserted into~\eqref{eq:fracgamma} this gives
\[
    \frac{\gamma_Q}{\gamma_{C_1}} = \frac{\sqrt{5+\sqrt{5}} \sqrt{2} }{\sqrt{3}} \approx 2.2. 
\]

    Using the upper bound \eqref{eq:deltaWeakCondUpperBound} for the numerator together with
    $\delta \leq \frac{n-r}{8n^2}$
    and the lower bound \eqref{eq:deltaWeakCondLowerBound} for the denominator we get
    \begin{eqnarray*}
    \frac{\kappa^{(C_1)}_w(\delta)}{\kappa^{(Q)}_w(\delta)}
         &\leq& \frac{\sqrt{\frac{n-r}{8n^2 \delta}}}{{\sqrt{\frac{(3n^2-1)(n-r)}{(3n^2+n-r-2)(3n^2+n-r-1)\delta}}}}\cdot \frac{\gamma_Q}{\gamma_{C_1}} \\
         &=& \sqrt{\frac{3n^2+n-r-2}{8n^2}}\cdot \sqrt{\frac{3n^2+n-r-1}{3n^2-1}} \cdot \frac{\gamma_Q}{\gamma_{C_1}}\le \frac{ \sqrt{3 \cdot 13} }{2\cdot\sqrt{2\cdot 11}} \cdot \frac{\gamma_Q}{\gamma_{C_1}},
    \end{eqnarray*}
    where the last inequality follows from observing that the bound is maximized for $r = 0$ and $n = 2$. Together with the derived bounds for $\gamma_Q/\gamma_{C_1}$, this concludes the proof.
\end{proof}

\change{To cover the missing case of Theorem~\ref{t:FCFratio}, we consider}
\begin{equation}\label{alternateLin}
    \hat{C}_1(\lambda) = \lambda \begin{bmatrix}
        M & C\\
        0 & I_n
    \end{bmatrix} + \begin{bmatrix}
        0 & K\\
        -I_n & 0
    \end{bmatrix},
\end{equation}
which is a strong linearization in $\mathbb L_1$ of $Q$ according to~\cite[Theorem 4.1]{de2009linearizations}. 
\begin{theorem}\label{t:ACFratio}
Under the assumptions stated in Theorem~\ref{t:FCFratio}, the linearization $\hat C_1$ defined in~\eqref{alternateLin} satisfies
\[
        \frac{\kappa^{(\hat C_1)}_w(\delta)}{\kappa^{(Q)}_w(\delta)} \leq \begin{cases}
        c_1 & \text{if } \change{|\lambda_0|> 1}, \; \|C\|_2 \leq 1, \\
        c_2 & \text{if } |\lambda_0|\leq 1.
        \end{cases}
\]
\end{theorem}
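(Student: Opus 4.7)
The plan is to mirror the proof of Theorem~\ref{t:FCFratio} step by step, reducing everything to a bound on $\gamma_Q/\gamma_{\hat C_1}$. First, I would identify the right and left eigenvectors of $\hat C_1$ in terms of those of $Q$. A direct multiplication confirms that $\hat C_1(\lambda)\begin{bmatrix}\lambda I\\ I\end{bmatrix}=\begin{bmatrix} Q(\lambda)\\ 0\end{bmatrix}$, identical to~\eqref{eq:linrelation}, so the orthonormal right basis~\eqref{eq:rightC1norm1} applies verbatim to $\ker(\hat C_1(\lambda_0))$. For the left side, the same minimal-basis argument as in Section~\ref{sec:linearization}, now applied to $\hat C_1(\lambda)^*$ and using $y(\lambda)^* Q(\lambda) = 0$, shows that the $i$-th column of a minimal basis of $\mathcal N(\hat C_1^*)$ is $\begin{bmatrix} y_i(\lambda)\\ (\lambda M)^* y_i(\lambda)\end{bmatrix}$. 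Consequently, the orthonormal basis $[Y_L,\,y_L]$ of $\ker(\hat C_1(\lambda_0)^*)$ is defined exactly as in~\eqref{eq:LeftC1orth}, except that $(\lambda_0 M+C)^* y$ is replaced throughout by $(\lambda_0 M)^* y = \bar\lambda_0 M^* y$, giving a new normalizing constant $\hat\beta$.

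Second, I would derive the analog of~\eqref{eq:expressionlincondnumber} by differentiating the $\hat C_1$-version of~\eqref{eq:linrelation}, left-multiplying by $y_L^*$, and using $Y^* Q'(\lambda_0) x = 0$ exactly as in the proof of Theorem~\ref{t:FCFratio}; this gives $y_L^*\hat C_1'(\lambda_0)x_L = \frac{1}{\hat\beta\sqrt{1+|\lambda_0|^2}}\,y^* Q'(\lambda_0)x$. Hence identity~\eqref{eq:fracgamma} holds with $\beta$ replaced by $\hat\beta$, and the prefactor bound $\frac{1+|\lambda_0|^2}{\sqrt{1+|\lambda_0|^2+|\lambda_0|^4}} \le 2/\sqrt 3$ carries over unchanged.

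Third, I would bound $\hat\beta$ in the two regimes. Using $\|M\|_2 = 1$ directly gives $\hat\beta \le \sqrt{1+|\lambda_0|^2}$, which combined with the prefactor yields $\gamma_Q/\gamma_{\hat C_1}\le 2\sqrt 2/\sqrt 3$ whenever $|\lambda_0|\le 1$ (no assumption on $\|C\|_2$ is needed, since the left lift no longer contains a $C^*$ term); this matches the $c_2$ bound. When $|\lambda_0|\ge 1$ and $\|C\|_2\le 1$, I would additionally exploit $y^* Q(\lambda_0) = 0$, which after division by $\bar\lambda_0$ and taking adjoints produces $\bar\lambda_0 M^* y = -C^* y - \bar\lambda_0^{-1} K^* y$, hence $\hat\beta\le \sqrt{1+(\|C\|_2+|\lambda_0|^{-1})^2}$. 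Taking the minimum of the two $\hat\beta$-estimates and optimizing as in Theorem~\ref{t:FCFratio}---with the role of $|\lambda_0|$ now inverted---gives $\hat\beta\le\sqrt{(5+\sqrt 5)/2}$ and hence the $c_1$ bound. Inserting these ratios into~\eqref{eq:deltaWeakCondUpperBound} and~\eqref{eq:deltaWeakCondLowerBound} completes the argument; the case distinction is swapped relative to Theorem~\ref{t:FCFratio} precisely because removing $C$ from the left lift makes the trivial bound favorable for \emph{small} $|\lambda_0|$ and the eigenvalue-equation bound favorable for \emph{large} $|\lambda_0|$. The main (and essentially only) obstacle is verifying the form of the minimal basis of $\mathcal N(\hat C_1^*)$, which requires a short direct computation analogous to the one carried out for $C_1$ in Section~\ref{sec:linearization}; the rest is a one-to-one translation.
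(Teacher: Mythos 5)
Your proposal is correct and follows essentially the same route as the paper: it reuses the relation $\hat C_1(\lambda)\begin{bmatrix}\lambda I\\ I\end{bmatrix}=\begin{bmatrix}Q(\lambda)\\ 0\end{bmatrix}$, replaces the left lift $(\lambda_0 M+C)^*y$ by $\bar\lambda_0 M^*y$, obtains the modified bound $\hat\beta\le\min\bigl\{\sqrt{1+(|\lambda_0|^{-1}+\|C\|_2)^2},\sqrt{1+|\lambda_0|^2}\bigr\}$, and then repeats the Theorem~\ref{t:FCFratio} argument with $|\lambda_0|$ replaced by $|\lambda_0|^{-1}$, exactly as the paper does. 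The only difference is that you spell out the derivation of the left nullspace basis and of the $\hat\beta$ bound via $Q(\lambda_0)^*y=0$, details the paper leaves implicit.
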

\begin{proof}
The proof is nearly identical with the \change{proofs of Lemma~\ref{t:FCFratiogeneral} and} Theorem~\ref{t:FCFratio}; in particular the relation~\eqref{eq:linrelation} is satisfied by $\hat C_1$ as well. The only significant difference is that, instead of~\eqref{eq:LeftC1}, the corresponding basis of $\ker(C_1(\lambda_0)^*)$ now takes the form
\[
 \begin{bmatrix}
   \tilde Y_L & \tilde y_L
   \end{bmatrix} = 
 \begin{bmatrix} Y & y \\
    \bar \lambda_0 M^* Y  & \bar \lambda_0 M ^* y
\end{bmatrix}.
\]
In turn, the upper bound~\eqref{eq:minimum} on $\beta$ is replaced by
\[
\beta \le \min \left\{ \sqrt{1+(|\lambda_0|^{-1}+\|C\|_2)^2}, \sqrt{1+|\lambda_0|^{2}} \right\}.
\]
Then the arguments in the rest of the proof of Theorem~\ref{t:FCFratio} apply with $|\lambda_0|$ replaced by $|\lambda_0|^{-1}$. \end{proof}

To summarize, after ensuring $\|M\|_2 = \|K\|_2 = 1$ with appropriate scaling, the use of linearization $C_1$ for eigenvalues of large magnitude and 
linearization $\hat C_1$ for eigenvalues of small magnitude ensures that the eigenvalue sensitivity remains essentially unchanged.

\section{Computation of simple finite eigenvalues}

Random perturbations turn, with probability one, a singular polynomial eigenvalue problem $P(\lambda)$ into a regular one and, at the same time, we have seen that their impact on the simple eigenvalues of $P$ is controlled, with high probability, by the weak condition numbers. These considerations suggest to use controlled random perturbations that dominate roundoff error in order \change{to} facilitate standard eigenvalue solvers.
However, attention needs to be paid to the fact that the perturbation of the singular part of $P$ will cause additional spurious eigenvalues. In summary, we suggest the following algorithm template for computing all finite simple eigenvalues of $P$:
\begin{itemize}
    \item[1.] Choose \change{a} uniformly distributed random perturbation $E(\lambda)$ and define \change{the} perturbed polynomial $\tilde{P}(\lambda):=P(\lambda) + \varepsilon E(\lambda)$.
    \item[2.] Solve \change{the} eigenvalue problem for $\tilde{P}$ by a standard method (e.g., linearization and QZ algorithm).
    \item[3.] Decide which computed eigenvalues correspond to true eigenvalues of the original matrix polynomial $P$ and which are spurious.
\end{itemize}
In the next section, we will develop a strategy for implementing Step 3.

\subsection{Criteria for eigenvalue classification} \label{sec:boundcond}

To distinguish true eigenvalues from spurious eigenvalues, we will make use of the quantity 
$
    \gamma_P 
$
featuring prominently in bounds for the weak eigenvalue condition number. For this purpose, it is necessary to not only compute the (perturbed) eigenvalue $\lambda(\varepsilon)$ in Step 2 above, but also the corresponding eigenvectors $x(\varepsilon)$, $y(\varepsilon)$.

In this section, we will show that the quantity $\gamma_P$ defined in~\eqref{eq:gammaP}, which features prominently in bounds for the weak eigenvalue condition number, provides a well suited criterion for making the decision in Step 3.

Let $y(\varepsilon)$ and $x(\varepsilon)$ be \change{suitably normalized right/left} eigenvectors corresponding to the eigenvalue $\lambda(\varepsilon)$ \eqref{eq:Expansion} of \eqref{eq:pertPEP}. It is proven in \cite{de2010first} that generically, the limits $\lim_{\varepsilon\to 0}x(\varepsilon) = \overline{x}$ and $\lim_{\varepsilon\to 0}y(\varepsilon) = \overline{y}$ belong to \change{$\ker \left(P(\lambda_0)\right)$} and \change{$\ker \left(P(\lambda_0)\right)^*$}, respectively. More precisely, the following theorem holds.
\begin{theorem} [\cite{de2010first}] \label{thm:deteran}
    Let $P(\lambda)\in\mathbb{C}^{n\times n}_m[\lambda]$ be a matrix polynomial of \change{normal} rank $r$, \change{having a simple eigenvalue $\lambda_0$ with right/left eigenvectors $x,y$}. Let  $\begin{bmatrix}
    X & x
    \end{bmatrix}$ and $\begin{bmatrix}
    Y & y
    \end{bmatrix}$ be bases for \change{$\ker \left(P(\lambda_0)\right)$} and \change{$\ker \left(P(\lambda_0)\right)^*$}, respectively. Let $E(\lambda)\in\mathbb{C}^{n\times n}_m[\lambda]$ be such that $\begin{bmatrix}
    Y & y
    \end{bmatrix}^*E(\lambda_0)\begin{bmatrix}
    X & x
    \end{bmatrix}$ is non--singular. Let $\zeta$ be an eigenvalue of the non--singular pencil
    \begin{equation}\label{pencilEvec}
        \begin{bmatrix}
    Y & y
    \end{bmatrix}^*E(\lambda_0)\begin{bmatrix}
    X & x
    \end{bmatrix} + \zeta \change{\cdot} \begin{bmatrix}
    Y & y
    \end{bmatrix}^*P'(\lambda_0)\begin{bmatrix}
    X & x
    \end{bmatrix},
    \end{equation}
    and let $a,b$ be corresponding left/right eigenvectors. \change{Then, for sufficiently small $\varepsilon>0$, the eigenvalue $\lambda(\varepsilon)$ of the perturbed matrix polynomial $P(\lambda)+\varepsilon E(\lambda)$ satisfying  \eqref{eq:Expansion}} has left and right eigenvectors \change{of the form}
\[
        y(\varepsilon) = \begin{bmatrix}
    Y & y
    \end{bmatrix}a + O(\varepsilon),\;\; x(\varepsilon) = \begin{bmatrix}
    X & x
    \end{bmatrix}b + O(\varepsilon).
\]
\end{theorem}

\change{From now on, we assume that 
$\begin{bmatrix}
    X & x
    \end{bmatrix}$, $\begin{bmatrix}
    Y & y
    \end{bmatrix}$ are orthonormal and $a,b$ have norm $1$.}
\change{By Theorem~\ref{thm:deteran}, we may assume for sufficiently small $\varepsilon$} that the eigenvectors of the perturbed pencil are approximately given by $\overline{x} = \begin{bmatrix}
    X & x
    \end{bmatrix}b$ and $\overline{y} = \begin{bmatrix}
    Y & y
    \end{bmatrix}a$. This allows us to \change{approximately} compute the following quantity:
\begin{equation}\label{overlineGK}
    \overline{\gamma_P} := |\overline{y}^*P'(\lambda_0)\overline{x} | \Big( \sum^m_{j=0}|\lambda_0|^{2j} \Big)^{-1/2}.
\end{equation}
If $\overline{x}=x$ and $\overline{y} = y$, we have $\overline{\gamma}_P = \gamma_P$. Otherwise,
\begin{align}\label{LambdaPCon}
\begin{split}
    \overline{y}^*P'(\lambda_0)\overline{x} &= a^*\begin{bmatrix}
        Y & y
    \end{bmatrix}^*P'(\lambda_0)\begin{bmatrix}
        X & x
    \end{bmatrix}b\\
    &= (y^*P'(\lambda_0)x) a^* e_{n-r+1}e^*_{n-r+1}b\\
    &= (y^*P'(\lambda_0)x)\change{\cdot \overline{a_{n-r+1}}\cdot b_{n-r+1}};
\end{split}
\end{align}
\change{see~\cite[Eq. (16)]{lotz2020wilkinson} for the second equality.}
\change{Proposition in 6.5 in \cite{lotz2020wilkinson} establishes the following result by showing
that $a$ and $b$ are uniformly (but not independently) distributed.}
\begin{proposition} \label{p:gammaP}
    Let $P(\lambda)\in\mathbb{C}^{n\times n}_m[\lambda]$ be a matrix polynomial of normal rank $r<n$ with simple eigenvalue $\lambda_0\in\mathbb{C}$, and let $E \sim \mathcal{U}(2N)$ be a random perturbation. Let $a,b$ be left and right eigenvectors of the linear pencil \eqref{pencilEvec}, let $\overline{x} = \begin{bmatrix}
    X & x
    \end{bmatrix}b$ and $\overline{y} = \begin{bmatrix}
    Y & y
    \end{bmatrix}a$, and define $\overline{\gamma}_P$ as in \eqref{overlineGK}. Then
\[
        \mathbb{E}[\overline{\gamma}_P]\leq (n-r)^{-1/2}\gamma_P, \;\; \mathbb{P}(\overline{\gamma}_P\geq \gamma_P\cdot t)\leq e^{-(n-r)t^2}.
\]
\end{proposition}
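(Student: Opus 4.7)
The plan is to reduce the statement to an elementary tail/expectation computation for entries of uniform random unit vectors. From~\eqref{LambdaPCon} and the definition~\eqref{overlineGK} one immediately reads off the identity
\[
\overline{\gamma}_P = \gamma_P \cdot |a_\ell| \cdot |b_\ell|, \qquad \ell := n-r+1,
\]
where $a_\ell$ and $b_\ell$ denote the $\ell$-th components of $a$ and $b$. Using the explicit formulas $b = G^{-1} e_\ell / \|G^{-1} e_\ell\|_2$ and $a = G^{-*} e_\ell / \|G^{-*} e_\ell\|_2$ recalled before the proposition, together with the by-now-standard rescaling that replaces $E \sim \mathcal{U}(2N)$ by an appropriately scaled complex Gaussian perturbation (as already exploited in deriving Theorem~\ref{t:sigmaEdistr}), the matrix $G = [Y,y]^* E(\lambda_0) [X,x]$ becomes an $\ell \times \ell$ complex Gaussian. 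Right- and left-unitary invariance of $G$ then imply that each of $a$ and $b$ is marginally uniformly distributed on the unit sphere in $\mathbb{C}^\ell$, though they are of course coupled through $G$.

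The observation that lets us avoid having to understand the joint law of $(a,b)$ is the trivial inequality $|a_\ell| \cdot |b_\ell| \le |a_\ell|$, valid because $|b_\ell| \le \|b\|_2 = 1$. For a uniform unit vector in $\mathbb{C}^\ell$, the classical distributional identity $|a_\ell|^2 \sim \Bdist(1,\ell-1) = \Bdist(1,n-r)$ yields $\mathbb{E}[|a_\ell|^2] = 1/\ell$ and $\mathbb{P}(|a_\ell|^2 \ge s) = (1-s)^{n-r}$ for $s\in[0,1]$. Jensen's inequality then gives
\[
\mathbb{E}[\overline{\gamma}_P] \le \gamma_P \, \mathbb{E}[|a_\ell|] \le \gamma_P \sqrt{\mathbb{E}[|a_\ell|^2]} = \frac{\gamma_P}{\sqrt{\ell}} \le \frac{\gamma_P}{\sqrt{n-r}},
\]
proving the expectation bound. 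For the tail, when $t\in[0,1]$ we compute
\[
\mathbb{P}(\overline{\gamma}_P \ge \gamma_P t) \le \mathbb{P}(|a_\ell| \ge t) = (1-t^2)^{n-r} \le e^{-(n-r)t^2},
\]
using $\log(1-x)\le -x$ in the last step, while for $t>1$ the inequality is vacuous because $|a_\ell||b_\ell|\le 1$.

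The main conceptual step is the reduction in the first paragraph, identifying $\overline{\gamma}_P / \gamma_P$ as a product of two entries of uniform random unit vectors. Once this reduction is in place the argument is routine, and the nontrivial dependence between $a$ and $b$ plays no role thanks to the crude bound $|b_\ell|\le 1$. The only mildly delicate point is justifying that $a$ and $b$ are each marginally uniform on the sphere, which is a direct consequence of the unitary invariance of complex Gaussian matrices already invoked elsewhere in the excerpt.
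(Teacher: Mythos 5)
Your proof is correct, but note that the paper does not actually prove Proposition~\ref{p:gammaP}: it quotes the result from~\cite[Proposition 6.5]{lotz2020wilkinson}, with the surrounding text only supplying the two ingredients you also start from, namely the identity $\overline{\gamma}_P=\gamma_P\,|a(n-r+1)|\,|b(n-r+1)|$ read off from~\eqref{LambdaPCon} and the fact that $a$ and $b$ are each marginally uniform on the unit sphere of $\mathbb{C}^{n-r+1}$, so that $|a(n-r+1)|^2,|b(n-r+1)|^2\sim\Bdist(1,n-r)$. Your self-contained derivation from these facts is sound: the rescaling of $E\sim\mathcal{U}(2N)$ to a complex Gaussian and the reduction to orthonormal kernel bases are exactly the steps already used for Theorem~\ref{t:sigmaEdistr} (and $a$, $b$ are scale-invariant in $E$, so the replacement is legitimate); one-sided unitary invariance of $G$ then gives the marginal uniformity; the crude bound $|b(n-r+1)|\le 1$ removes any need to control the joint law of $(a,b)$; Jensen yields $\mathbb{E}\,|a(n-r+1)|\le (n-r+1)^{-1/2}\le(n-r)^{-1/2}$; and $(1-t^2)^{n-r}\le e^{-(n-r)t^2}$ settles the tail, the case $t>1$ being trivial since $\overline{\gamma}_P\le\gamma_P$ almost surely. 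Your route in fact gives the slightly sharper constant $(n-r+1)^{-1/2}$, and applying Cauchy--Schwarz to the product $|a(n-r+1)|\,|b(n-r+1)|$ rather than discarding one factor would improve the expectation bound further to $(n-r+1)^{-1}$; the only price of the cheap bound is in these constants, while its benefit is that the nontrivial coupling of $a$ and $b$ through $G$ plays no role, which is precisely the spirit of the argument in~\cite{lotz2020wilkinson}.
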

Proposition \ref{p:gammaP} implies that $\overline{\gamma}_P$ tends to underestimate $\gamma_P$ by a factor $1/\sqrt{n-r}$, reflecting that worst-case perturbation directions represent a rather specific set of rank-one matrices~\cite[Theorem 5]{tisseur2000backward}. From~\eqref{LambdaPCon} it also follows that $\overline{\gamma}_P \le \gamma_P$ and hence an ill-conditioned eigenvalue ($\gamma_P \approx 0$) never turns into a well-conditioned eigenvalue by a sufficiently small perturbation. For practical purposes, the opposite direction is somewhat more important, that is, any well-conditioned eigenvalue $\lambda_0$ of the original problem will show up as a well-conditioned eigenvalue of the perturbed problem as well. \change{The following proposition establishes this property: A well-conditioned eigenvalue ($\gamma_P \gg 0$) is unlikely to turn (asymptotically) into an ill-conditioned eigenvalue ($\overline{\gamma}_P\approx 0$).}
\change{\begin{proposition}\label{p:gammaPopposite} Considering the setting of Proposition~\ref{p:gammaP}, it holds for every $0 < t \le 1$ that
 \[
 \mathbb{P}(\overline{\gamma}_P < \gamma_P \cdot  t)\leq 2 t (n-r).
 \]
\end{proposition}}
\begin{proof}
 \change{Because $a,b$ are uniformly distributed, we have
$|\change{a_{n-r+1}|^2,|b_{n-r+1}|^2}\sim \Bdist(1,n-r)$.}     Using that
\begin{equation*}
    \min \left( \change{|a_{n-r+1}|^2, |b_{n-r+1}|^2} \right)^2 \leq \change{|a_{n-r+1}|^2\cdot |b_{n-r+1}|^2},
\end{equation*}
we then obtain
\begin{align*}
    & \mathbb{P}(\change{|a_{n-r+1}|^2\cdot |b_{n-r+1}|^2}<t^2)\\
    &\leq \mathbb{P}(\min \left( \change{|a_{n-r+1}|^2, |b_{n-r+1}|^2} \right)^2 < t^2) \\
    &\leq \mathbb{P}( \change{|a_{n-r+1}|^2}< {t}) + \mathbb{P}( \change{|b_{n-r+1}|}^2< {t})\\
    &= \frac{2}{\Bfun(1,n-r)}\int^{{t}}_{0} (1-x)^{n-r-1}\dx \le 2 t (n-r),
\end{align*}
\change{where the last step uses that $\Bfun(1,n-r) = 1/(n-r)$. Inserting this inequality into~\eqref{LambdaPCon} gives
\[
\mathbb{P}\big(|\overline{y}^*P'(\lambda_0)\overline{x}| < |{y}^*P'(\lambda_0){x}| \cdot t \big) \le 2 t (n-r),
\]
which concludes the proof by the definitions of $\gamma_P$, $\overline{\gamma}_P$.}
\end{proof}

Propositions~\ref{p:gammaP} and~\ref{p:gammaPopposite} do not exclude the possibility that the perturbed problem has spurious \emph{well-conditioned} eigenvalues originating from the singular part of $P(\lambda)$. In the following, we will argue why this cannot happen for small $\varepsilon$.

\change{Recall that a} complex number $\lambda_0$ is an eigenvalue of $P(\lambda)$ if and only if \change{$\rank (P({\lambda_0})) < \nrank \left(P\right) = r$} or, equivalently, $\sigma_r(P({\lambda_0})) = 0$, where $\sigma_j(\cdot)$ denotes the $j$th largest singular value of a matrix. \change{Therefore it seems reasonable to classify an eigenvalue $\tilde{\lambda}$ of the perturbed problem $\tilde P(\lambda) = P(\lambda)+\varepsilon E(\lambda)$ as \emph{spurious} if $\sigma_r(P(\tilde{\lambda}))$ is not very small.}
\change{More specifically, we consider $\tilde \lambda$ spurious if} \begin{equation} \label{eq:condspurious}
\sigma_r(P(\tilde{\lambda})) \ge 5  \change{\|E(\tilde \lambda)\|_F} \varepsilon.
\end{equation}
\change{We measure the conditioning of $\tilde \lambda$ using 
\[
 \overline{\kappa}_{\tilde P} = \overline{\gamma}^{-1}_{\tilde P}, \quad \overline{\gamma}_{\tilde P} = \frac{|\tilde y^*\tilde P'(\tilde \lambda)\tilde x|}{\|\tilde \Lambda^m \|_2},\quad \tilde \Lambda^m = \big[1,\tilde \lambda,\ldots,\tilde \lambda^{m} \big].
\]
where $\tilde{x}, \tilde{y}$ denote right/left eigenvectors belonging to $\tilde \lambda$ with $\|\tilde x\|_2 = \|\tilde y\|_2 = 1$. 
Generically, $\tilde P(\lambda)$ is non-singular and $\tilde \lambda$ is simple, in which case $\overline{\kappa}_{\tilde P}$ coincides with a common definition~\cite[Eq.~(10)]{adhikari2011structured} of eigenvalue condition number. The following result shows that $\overline{\kappa}_{\tilde P} = O(\varepsilon^{-1})$ for a spurious eigenvalue. }

\change{\begin{lemma}\label{l:PgammaSpourious}
With the notation introduced above, it holds that
\[
 \overline{\gamma}_{\tilde P} \le \varepsilon \|E\| (12 / \sigma_r(P(\tilde{\lambda})) + m)
\]
\end{lemma}
\begin{proof} For every $\varepsilon \ge 0$, let  $\tilde X(\varepsilon), \tilde Y(\varepsilon) \in \mathbb C^{n\times (n-r)}$ contain the right/left singular vectors belonging to the smallest $n-r$ singular values of $\tilde P(\tilde{\lambda})$.
Because of~\eqref{eq:condspurious}, $P(\tilde \lambda)$ has rank $r$ and, in turn, $\tilde X(0)$ and $\tilde Y(0)$ are orthonormal bases of $\ker(P(\tilde \lambda))$ and $\ker(P(\tilde \lambda)^*)$, respectively. 

  Existing perturbation theory for singular vectors~\cite{Stewart1973,wedin1927perturbation} allows us to link $\tilde X(\varepsilon),\tilde Y(\varepsilon)$ with $\tilde X(0),\tilde Y(0)$. In particular, Theorem 8.6.5 from~\cite{Golub2013} applies because of~\eqref{eq:condspurious} and 
yields the existence of matrices $Q_X,Q_Y$ such that
\[
 \mathrm{range}(\tilde X(\varepsilon)) = \mathrm{range}(\tilde X(0)+Q_X),\quad 
 \mathrm{range}(\tilde Y(\varepsilon)) = \mathrm{range}(\tilde Y(0)+Q_Y),
\]
and $\|[Q_X,Q_Y]\|_F \le 4 \varepsilon   \|E(\tilde \lambda)\|_F / \tau$, where we set $\tau := \sigma_r(P(\tilde{\lambda}))$.
By definition, $\mathrm{range}(\tilde X(\varepsilon))$ and
$\mathrm{range}(\tilde Y(\varepsilon))$ contain 
$\tilde{x}$ and $\tilde{y}$, respectively. Hence, there are vectors $\tilde b, \tilde a$ such that 
$\tilde x = \tilde X(\varepsilon) \tilde b$, $\tilde y = \tilde Y(\varepsilon) \tilde a$, and
$\|\tilde a\|_2 = \|\tilde b\|_2 = 1$. Setting 
$q_x = Q_X \tilde b$, $q_y = Q_Y \tilde a$, we thus obtain
\begin{equation} \label{eq:formulaxy}
     \tilde{x} = \tilde X(0) \tilde{b} + q_x, \quad \tilde{y} = \tilde Y(0) \tilde{a} + q_y,\quad  \max\{\|q_x\|_2,\|q_y\|_2\}\leq 4 \varepsilon \|E(\tilde \lambda)\|_F  / \tau < 1.
\end{equation}
To proceed from here, we choose (polynomial) bases $X(\lambda)$, $Y(\lambda)$ of the right/left nullspaces of $P$ such that 
$\tilde X(0) = X(\tilde \lambda)$, $\tilde Y(0) = Y(\tilde \lambda)$.
Differentiating the relation $P(\lambda) X(\lambda) = 0$ gives $P'(\tilde \lambda) X(\tilde \lambda) + P(\tilde \lambda) X'(\tilde \lambda) = 0$ and, hence,
\[
 \tilde Y(0)^* P^\prime(\tilde \lambda) \tilde X(0) = Y(\tilde \lambda)^* P'(\tilde \lambda) X(\tilde \lambda) = 0.
\]
Combined with~\eqref{eq:formulaxy}, we obtain
\begin{align*}
 |\tilde y^* P'(\tilde \lambda) \tilde x| &\le \|P'(\tilde \lambda)q_x\|_2 + \|q_y^* P'(\tilde \lambda) \|_2 + |q_y^* P'(\tilde \lambda) q_x| \\
 &\le \|P'(\tilde \lambda)\|_2 (\|q_x\|_2 + \|q_y\|_2 + \|q_x\|_2 \|q_y\|_2) \\
 &< 12 \varepsilon \|E(\tilde \lambda)\|_F  / \tau \le 12 \varepsilon \|\tilde \Lambda^m\|_2 \|E\| / \tau.
\end{align*}
In turn,
\[
 |\tilde y^* \tilde P'(\tilde \lambda) \tilde x| \le |\tilde y^* P'(\tilde \lambda) \tilde x| + \varepsilon  \|E^\prime(\tilde \lambda)\|_F
 \le \varepsilon \|\tilde \Lambda^m\|_2 \|E\| (12/\tau + m),
\]
where we used $
\|E^\prime(\tilde \lambda)\|_F \le \|E\| \big( \sum_{j = 1}^m | j \tilde \lambda^{j-1}|^2 \big)^{1/2} \le m \|E\| \|\tilde \Lambda^m\|_2$. By the definition of $\overline{\gamma}_{\tilde P}$, this concludes the proof.
\end{proof}}

Propositions~\ref{p:gammaP},~\ref{p:gammaPopposite} and Lemma~\ref{l:PgammaSpourious} justify the use of the condition number for detecting all (well-conditioned) eigenvalues of the original problem. If the \change{condition number} is modest then it is likely that a (well-conditioned) eigenvalue of the original problem has been detected, while a high \change{condition number} indicates an ill-conditioned or spurious eigenvalue.

\subsection{Algorithm for singular generalized eigenvalue problems}

The results from Section~\ref{sec:boundcond} lead to Algorithm~\ref{a:mainAlg} for computing finite well-conditioned eigenvalues of a singular matrix pencil \change{$\lambda A_1+A_0$} by perturbing it with a random pencil \change{$E(\lambda) = \lambda E_1 + E_0$}. 

The algorithm for solving the singular generalized eigenvalue problem by using the condition number of computed eigenvalues as the criteria for the detection of true eigenvalues is the following:
\begin{algorithm}[H]
\caption{Algorithm for solving singular generalized eigenvalue problem}
\begin{algorithmic}[1]
\label{a:mainAlg}
\normalsize
\REQUIRE Singular pencil \change{$\lambda A_1+A_0$} of order $n$, $\varepsilon$, $\delta$, $\mathsf{tol}$
\STATE $\change{E_{0}} = \texttt{randn}(n,n)+i\cdot\texttt{randn}(n,n)$, $\change{E_{1}} = \texttt{randn}(n,n)+i\cdot\texttt{randn}(n,n)$
\STATE \change{$E_0 = E_0/\|E_0\|$, $E_1 = E_1/\|E_1\|$}
\STATE \change{$\lambda \tilde{A}_1 + \tilde{A}_0 = \lambda A_1+ A_0 + \varepsilon(\lambda E_1 + E_0)$}
\STATE Compute eigenvalues $\lambda_{j}$ and corresponding eigenvectors $x_{j}$ and $y_{j}$ of pencil \change{$\lambda \tilde{A}_1+\tilde{A}_0$}
\STATE For each eigentriple $(\lambda_{j},x_{j},y_{j})$ compute $\overline{\kappa}_j=\frac{\sqrt{1+|\lambda_j|^2}}{|y_j^*Bx_j|}$
\STATE $\Lambda = \left\{ \lambda_{j}: \; \change{\overline{\kappa}_j}\leq \mathsf{tol} \right\}$
\RETURN $\Lambda$
\end{algorithmic}
\end{algorithm}

\subsection{Algorithm for singular quadratic eigenvalue problems}

In this section we propose a new procedure to compute finite well-conditioned eigenvalues of a singular $n\times n$ quadratic eigenvalue problem $Q(\lambda) = \lambda^2M+\lambda C +K$.

The first step is to obtain a normalized problem $\breve{Q}(\lambda) = \lambda^2\breve{M} + \lambda\breve{C} + \breve{K}$ with $\|\breve{M}\|_2 = \|\breve{K}\|_2=1$ by using the scaling parameters 
 $\gamma = \sqrt{\|K\|_2 / \|M\|_2}$, $\omega = 1/\|K\|_2$
 and setting
 \[
     \breve{M} = \omega\gamma^2M, \;\; \breve{C} = \omega\gamma C, \;\; \breve{K} = \omega K.
 \]
  Then small random perturbations of the coefficient matrices are used to obtain a regular quadratic eigenvalue problem $\tilde{Q}(\lambda) = \lambda^2\tilde{M}+\lambda \tilde{C} + \tilde{K}$. We apply the QZ algorithm to the linearizations $C_1(\lambda)$ and $\hat{C}_1(\lambda)$ of the perturbed problem  in order to compute eigenvalues of $\tilde Q$ and extract the corresponding right and left eigenvectors. Again, as in Algorithm \ref{a:mainAlg}, we use the condition number to classify the computed eigenvalues.

Since Theorems \ref{t:FCFratio} and \ref{t:ACFratio} hold, we will look for eigenvalues of magnitude larger than one among those computed using $C_1(\lambda)$, and those with magnitude smaller than one among those computed using $\hat{C}_1(\lambda)$. After we detect all eigenvalue of $\tilde Q$, we need to rescale them. This procedure is summarized in Algorithm~\ref{a:mainAlgQEP}.
\begin{remark}[Eigenvector recovery]
    After computing eigenvalues and corresponding left and right eigenvectors of linearizations $C_1(\lambda)$ and $\hat{C}_2(\lambda)$ in lines 6 and 7 of Algorithm \ref{a:mainAlgQEP}, we recover eigenvectors for the regular perturbed quadratic eigenvalue problem as in \cite{zeng2014backward}. More precisely, if linearization $C_1(\lambda)$ is used to compute eigentriple $(\lambda_j,x_j,y_j)$ left and right eigenvector for the quadratic problem is recovered as $y_j(1:n)$ and $x_j(1:n)$, respectively.
    
    In the case of linearization $\hat{C}_1(\lambda)$ and computed eigentriple $(\hat{\lambda}_j,\hat{x}_j,\hat{y}_j)$, left and right eigenvectors are recovered as $\hat{y}_j(1:n)$ and $\hat{x}_j(n+1:2n)$, respectively. 
    
    This recovery insures backward stability.
\end{remark}
\begin{algorithm}[H]
\caption{Algorithm for solving singular quadratic eigenvalue problem} 
\begin{algorithmic}[1]
\label{a:mainAlgQEP}
\normalsize
\REQUIRE Singular quadratic matrix polynomial $\lambda^2M+\lambda C + K$ of order $n$, $\varepsilon$, $\delta$, $\mathsf{tol}$
\STATE $\gamma = \sqrt{\|K\|/\|M\|}$, $\omega = 1/\|K\|$
\STATE $\breve{M} = \omega\gamma^2M$, $\breve{C} = \omega\gamma C$, $\breve{K} = \omega K$
\STATE $E_{j} = \texttt{randn}(n,n)+i\cdot\texttt{randn}(n,n)$, $E_j = E_j/\|E_j\|$, $j=1,2,3$
\STATE $\tilde{M} = \breve{M}+\varepsilon E_1$, $\tilde{C} = \breve{C}+\varepsilon E_2$, $\tilde{K} = \breve{K}+\varepsilon E_3$
\STATE $ C_1(\lambda) = \begin{bmatrix} \tilde{C} & \tilde{K}\\ -I_n & 0 \end{bmatrix} - \lambda \begin{bmatrix} -\tilde{M} & 0\\ 0 & -I_n \end{bmatrix} $, $\hat{C}_1(\lambda) =\begin{bmatrix}
        0 & \tilde{K}\\
        -I_n & 0
    \end{bmatrix} - \lambda \begin{bmatrix}
        -\tilde{M} & -\tilde{C}\\
        0 & -I_n
    \end{bmatrix} $
\STATE Compute eigenvalues $\lambda_{j}$ and corresponding eigenvectors $x_{j}$ and $y_{j}$ of pencil $C_1(\lambda)$
\STATE Compute eigenvalues $\hat{\lambda}_{j}$ and corresponding eigenvectors $\hat{x}_{j}$ and $\hat{y}_{j}$ of pencil $\hat{C}_1(\lambda)$
\FOR{$j=1,\ldots,2n$}
\STATE For each $(\lambda_{j},x_{j}(1:n),y_{j}(1:n))$, $|\lambda_j|\geq 1$ compute $\overline{\kappa}_{j} = \frac{\sqrt{1+|\lambda_j|^2+|\lambda_j|^4}}{|y_j^*(1:n)(2\lambda_j M +C)x_j(1:n)|}$
\STATE For each $\small{(\!\hat{\lambda}_{j},\hat{x}_{j}(\!n+1:2n\!),\hat{y}_{j}(\!1:n\!)\!\!)}$,$|\!\hat{\lambda}_j\!|\!\! <\!\! 1$ compute $\hat{\overline{\kappa}}_{j}\!\! =\!\! \frac{\sqrt{1+|\!\hat{\lambda}_j|^2+|\hat{\lambda}_j\!|^4}}{|\!\hat{y}_j^*(\!1:n\!)(\!2\hat{\lambda}_j M +C\!)\hat{x}_j(\!n+1:2n\!)\!|}$
\ENDFOR
\STATE $\Lambda = \left\{ \gamma\cdot \lambda_{j}: \; \overline{\kappa}_{j}\leq \mathsf{tol} \right\} \cup \left\{ \gamma\cdot\hat{\lambda_{j}}: \; \hat{\overline{\kappa}}_{j}\leq \mathsf{tol} \right\}$
\RETURN $\Lambda$
\end{algorithmic}
\end{algorithm}

\section{Numerical examples}
In this section we present the performance of the proposed Algorithms \ref{a:mainAlg} and \ref{a:mainAlgQEP}. \change{We use MATLAB 9.0.0.341360 (R2016a) in double precision (IEEE Standard 754).} For singular quadratic eigenvalue problems we use several examples from the literature and other examples constructed by ourselves. In the case of matrix pencils, we use three examples from \cite{hochstenbach2019solving}.

Let $n$ be the dimension of the problem considered, generalized or quadratic eigenvalue problem. If not stated otherwise, the following default values are used in the numerical experiments
\begin{align}\label{defaultpar}
\begin{split}
    \varepsilon = 10^{-8},\quad \mathsf{tol} = 10^4.
\end{split}
\end{align}

Since we know the number of finite simple eigenvalue and their values for all examples, we will present the empirical probability that Algorithm~\ref{a:mainAlg} or~\ref{a:mainAlgQEP} detects all of them. \change{This is done in the following way. First we check if the number of the detected eigenvalues equals the number of true finite eigenvalues of the original problem. Then, in order to check that these eigenvalues are not spurious, we compute $\sigma_r(Q(\lambda_0))$, where $r = \nrank(Q(\lambda))$, for every $\lambda_0$ that is declared as an eigenvalue by the algorithm. If this value is smaller than $100 \varepsilon\cdot \max(1,|\lambda_0|^2)$ we conclude that detected eigenvalue is not spurious. Analogously, for generalized eigenvalue problems we compute $\sigma_r(\lambda_0A_1+A_0)$, where $r=\nrank(\lambda A_1+A_0)$.} We perform $n_t = 1000$ runs of the algorithm with different perturbation matrices and count how many times $n_s$ it was successful. Then, the empirical probability that the algorithm finds all simple finite eigenvalues of a given singular problem is computed as $p = n_s/n_t$.

\paragraph{Example 1.}
This is the singular quadratic eigenvalue problem from Example 1 in \cite{van1983eigenstructure}:
\begin{equation*}
    \lambda^2 \begin{bmatrix}
        1 & 4 & 2\\
        0 & 0 & 0\\
        1 & 4 & 2
    \end{bmatrix} + \lambda \begin{bmatrix}
        1 & 3 & 0\\
        1 & 4 & 2\\
        0 & -1 & -2
    \end{bmatrix} + \begin{bmatrix}
        1 & 2 & -2\\
        0 & -1 & -2\\
        0 & 0 & 0
    \end{bmatrix}.
\end{equation*}
It is known that $\change{\nrank (Q)} = 2$, and that there is only one finite eigenvalue $\lambda_0 =1$. The empirical probability that Algorithm \ref{a:mainAlgQEP} finds this eigenvalue is $p=0.999$.
\paragraph{Example 2.}
This is Example 1.8 from \cite{byers2008trimmed}:
\begin{equation*}
    \lambda^2 \begin{bmatrix}
        1 & 0\\
        0 & 0
    \end{bmatrix} + \lambda \begin{bmatrix}
        1 & 0\\
        0 & 0
    \end{bmatrix} + \begin{bmatrix}
        0 & 0\\
        1 & 0
    \end{bmatrix}.
\end{equation*}
It is known that there is no finite eigenvalue. The empirical probability that Algorithm \ref{a:mainAlgQEP} correctly detects no finite eigenvalue is one.
\paragraph{Example 3.}
This is Example 2.5 from \cite{byers2008trimmed}:
\begin{equation*}
    \lambda^2\begin{bmatrix}
        1 & 0 & 0 & 0\\
        0 & 1 & 0 & 0\\
        0 & 0 & 0 & 0\\
        0 & 0 & 0 & 0
    \end{bmatrix} + \lambda \begin{bmatrix}
        0 & 1 & 1 & 0\\
        1 & 0 & 0 & 1\\
        1 & 0 & 0 & 0\\
        0 & 0 & 0 & 0
    \end{bmatrix} + \begin{bmatrix}
        0 & 0 & 0 & 0\\
        0 & 0 & 1 & 0\\
        0 & 1 & 0 & 1\\
        0 & 0 & 0 & 0
    \end{bmatrix}.
\end{equation*}
It is known that there is one finite eigenvalue zero, and two infinite eigenvalues. The empirical probability that Algorithm \ref{a:mainAlgQEP} detects the zero eigenvalue is $p = 1$.
\paragraph{Example 4.}
The following example was constructed so that $1$ and $2$ are only finite eigenvalues
\begin{equation*}
    \lambda^2 \begin{bmatrix}
        0 & 1 & 0\\0 & 0 & 1\\0 & 1 & 1
    \end{bmatrix} + \lambda \begin{bmatrix}
        1 & -1 & 0\\0 & 1 & -2\\1 & 0 & -2
    \end{bmatrix} + \begin{bmatrix}
        -1 & 0 & 0\\0 & -2 & 0\\-1 & -2 & 0
    \end{bmatrix}.
\end{equation*}
The empirical probability of Algorithm \ref{a:mainAlgQEP} detecting these eigenvalues is $p = 0.999$.
\paragraph{Example 5.}
This example was constructed so that it has 5 finite eigenvalues $\lambda_i = 1+1e-5\cdot i$, $i=1,\ldots,5$. The coefficient matrices are of size $8\times 8$ and defined as
\begin{align*}
    M(i,j) &= \begin{cases}
    1 & \text{if } i=1,\ldots 5,\;\; j=i+1,\\
    0 & \text{otherwise}
    \end{cases},\;\; M \change{\gets} U^*MV,\\
    C(i,j) & = \begin{cases}
    1 & \text{if }i=1,\ldots 5, \;\;j=i,\\
    -\lambda_i & \text{if }i=1,\ldots 5, \;\;j=i+1\\
    0 & \text{otherwise}
    \end{cases},\;\; C \change{\gets} U^*CV,\\
    K &= \diag(-\lambda_i,0_3),\;\;K \change{\gets} U^*KV,
\end{align*}
with $U = \texttt{orth(rand(8,8))}$ and $V = \texttt{orth(rand(8,8))}$. The empirical probability that Algorithm \ref{a:mainAlgQEP} correctly detects $5$ eigenvalues is $p = 0.999$.
\paragraph{Example 6.}
This example was constructed so that it has 8 finite eigenvalues $\lambda_1 = 0$, $\lambda_i = 1/i$, $i=2,\ldots,8$. The coefficient matrices are of order $11\times 11$ and defined as
\begin{align*}
    M(i,j) &= \begin{cases}
    1 & \text{if } i=1,\ldots 8,\;\; j=i+1,\\
    0 & \text{otherwise}
    \end{cases},\;\;M \change{\gets} U^*MV,\\
    C(i,j) & = \begin{cases}
    1 & \text{if }i=1,\ldots 8, \;\;j=i,\\
    -\lambda_i & \text{if }i=1,\ldots 8, \;\;j=i+1\\
    0 & \text{otherwise}
    \end{cases},\;\; C \change{\gets} U^*CV,\\
    K &= \diag(-\lambda_i,0_3),\;\;K \change{\gets} U^*KV,
\end{align*}
with $U = \texttt{orth(rand(11,11))}$ and $V = \texttt{orth(rand(11,11))}$. The empirical probability that Algorithm \ref{a:mainAlgQEP} correctly detects $8$ eigenvalues is $p = 0.999$.
\paragraph{Example 7.}
In this example we consider reversed quadratic eigenvalue problem from Example 6. Therefore, there are now 7 finite eigenvalues $\lambda_i = i+1$, $i=1,\ldots,7$ and one infinite eigenvalue. The empirical probability that Algorithm \ref{a:mainAlgQEP} correctly detects $7$ eigenvalues is $p = 0.991$.

\paragraph{Example 8.}
In this example, we define the diagonal matrix
\begin{equation*}
    D = \diag(1,a^2,a,1,a^3,1,a^4,a^5,a^6, 1,1),\;\; a=2.
\end{equation*}
We use the coefficient matrices from Example 7 and change them to $M \gets UDMDV$, $C \gets UDCDV$ and $K \gets UDKDV$. Diagonal scaling makes the eigenvalues more ill conditioned. In turn, the default tolerance $10^4$ does not yield satisfactory results. More precisely, the empirical probability of Algorithm \ref{a:mainAlgQEP} to detect $8$ eigenvalues with the default input parameters $\eqref{defaultpar}$ is $p = 0.527$. On the other hand, if we increase the tolerance to $\mathsf{tol} = 10^5$, the probability $p$ increases to $0.952$.\\

Next, we present the performance of Algorithm \ref{a:mainAlg} for three examples of generalized eigenvalue problems from \cite{hochstenbach2019solving}.

\paragraph{Example 9.}
This is the $7\times 7$ matrix pencil from Example 6.1. in \cite{hochstenbach2019solving}, which has normal rank $\nrank(A-\lambda B) = 6$ and two finite eigenvalues, $1/2$ and $1/3$.
The empirical probability that Algorithm \ref{a:mainAlg} correctly \change{detects} both of these eigenvalues is $p = 1$.
\paragraph{Example 10.}
This is Example C3 from \cite{demmel1988accurate}:
\begin{equation*}
    \begin{bmatrix}
        1 & -2 & 100 & 0 & 0\\
        1 & 0 & -1 & 0 & 0  \\
        0 & 0 & 0 & 1 & -75 \\
        0 & 0 & 0 & 0 & 2
    \end{bmatrix} - \lambda \begin{bmatrix}
        0 & 1 & 0 & 0 & 0\\
        0 & 0 & 1 & 0 & 0\\
        0 & 0 & 0 & 1 & 0\\
        0 & 0 & 0 & 0 & 1
    \end{bmatrix}.
\end{equation*}
This pencil is rectangular of size $4\times 5$. Therefore, before using Algorithm \ref{a:mainAlg}, we add an additional row of zeros to make it square of order $5$. \change{(Notice that the regular part of the pencil remains unchanged by adding zero row. For more details see \cite{hochstenbach2019solving}.)} It is known that this pencil has two finite eigenvalues $1$ and $2$. The empirical probability of Algorithm \ref{a:mainAlg} detecting both of these eigenvalues is $p = 0.982$.
\paragraph{Example 11.}
This is the matrix pencil constructed in Example 6.4 from \cite{hochstenbach2019solving}. It has size $300$ and a total number of $90$ finite eigenvalues. Algorithm \ref{a:mainAlg} with default input arguments \eqref{defaultpar} never detects all eigenvalues. However, \change{if we increase the tolerance to $\mathsf{tol} = 10^8$ the empirical probability of success increases to $p=0.93$.}
\begin{remark}
We compared Algorithm~\ref{a:mainAlg} with the rank--completing perturbation algorithm from \cite{hochstenbach2019solving} for matrix pencils. The latter algorithm detects all true eigenvalues in every example in this section. We can conclude that both algorithms detected all true eigenvalues for all examples with the exception for Example 11 with default parameters. By tuning the \change{tolerance} we can detect the eigenvalues with high probability. One advantage of our algorithm over the rank--completing algorithm is that we do not have to explicitly compute the normal rank of the problem.
\end{remark}

\section{Conclusion}

We analyzed the influence of two linearizations on the $\delta$--weak condition number of singular quadratic matrix polynomials. Our results nicely match existing results for the regular case. In addition, we supplemented the theory from \cite{lotz2020wilkinson} in order to derive a reliable criterion for computing and detecting finite well-conditioned eigenvalues of singular matrix polynomials. The resulting algorithms perform well for the examples considered.

\section*{Acknowledgements}
 The authors thank Bor Plestenjak for helpful discussions on the numerical solution of singular eigenvalue problems, and Petra Lazi\'{c} for discussions regarding the theory of $\delta$--weak condition numbers. \change{The authors also thank the reviewers for their careful reading and useful comments.}

\bibliography{References}
\bibliographystyle{plain} 
\end{document}